\documentclass[11pt]{amsart}
\usepackage{latexsym}
\usepackage{amssymb,amsmath,mathabx, amscd}
\usepackage[pdftex]{graphicx}
\usepackage{enumerate}

\usepackage{colonequals}
\usepackage{comment}
\usepackage{tikz}
\usepackage{tikz-cd}
\usepackage[margin=1in]{geometry}
\usepackage[title]{appendix}
\usepackage{fancyhdr}
\usepackage{hyperref, mathrsfs}

\newcommand{\cc}{\mathbb{C}}
\renewcommand{\O}{\mathcal{O}}
\newcommand{\rr}{\mathbb{R}}
\newcommand{\pp}{\mathbb{P}}

\newcommand{\qq}{\mathbb{Q}}
\newcommand{\zz}{\mathbb{Z}}

\renewcommand{\ss}{\mathbb{S}}

\renewcommand{\O}{\mathcal{O}}

\renewcommand{\L}{\mathcal{L}}

\renewcommand{\L}{\mathcal{L}}

\newcommand{\Hom}{\operatorname{Hom}}

\newcommand{\ch}{\operatorname{ch}}
\newcommand{\rk}{\operatorname{rk}}

\newcommand{\Jac}{\operatorname{Jac}}
\newcommand{\Pic}{\operatorname{Pic}}

\newcommand{\gon}{\operatorname{gon}}
\newcommand{\Exc}{\operatorname{Exc}}
\newcommand{\Amp}{\operatorname{Amp}}
\newcommand{\NS}{\operatorname{NS}}

\newcommand{\Sym}{\operatorname{Sym}}
\newcommand{\airr}{\operatorname{a.\!irr}}

\newcommand{\res}{\text{res}}

\newcommand{\btimes}{\boxtimes}

\renewcommand{\bar}{\overline}

\newcommand{\ra}{\rightarrow}

\makeatletter
\def\legendre@dash#1#2{\hb@xt@#1{%
  \kern-#2\p@
  \cleaders\hbox{\kern.5\p@
    \vrule\@height.2\p@\@depth.2\p@\@width\p@
    \kern.5\p@}\hfil
  \kern-#2\p@
  }}
\def\@legendre#1#2#3#4#5{\mathopen{}\left(
  \sbox\z@{$\genfrac{}{}{0pt}{#1}{#3#4}{#3#5}$}%
  \dimen@=\wd\z@
  \kern-\p@\vcenter{\box0}\kern-\dimen@\vcenter{\legendre@dash\dimen@{#2}}\kern-\p@
  \right)\mathclose{}}
\newcommand\legendre[2]{\mathchoice
  {\@legendre{0}{1}{}{#1}{#2}}
  {\@legendre{1}{.5}{\vphantom{1}}{#1}{#2}}
  {\@legendre{2}{0}{\vphantom{1}}{#1}{#2}}
  {\@legendre{3}{0}{\vphantom{1}}{#1}{#2}}
}
\def\dlegendre{\@legendre{0}{1}{}}
\def\tlegendre{\@legendre{1}{0.5}{\vphantom{1}}}
\makeatother

\newtheorem{thm}{Theorem}[section]
\newtheorem{ithm}{Theorem}
\newtheorem{lem}[thm]{Lemma}

\newtheorem{prop}[thm]{Proposition}

\newtheorem{cor}[thm]{Corollary}
\newtheorem{icor}[ithm]{Corollary}

\theoremstyle{definition}

\newtheorem{rem}[thm]{Remark}
\newtheorem{irem}[ithm]{Remark}

\newcommand{\defi}[1]{\textsf{#1}}

\newcommand\blfootnote[1]{%
  \begingroup
  \renewcommand\thefootnote{}\footnote{#1}%
  \addtocounter{footnote}{-1}%
  \endgroup
}

\title{Low Degree Points on Curves}
\author{Geoffrey Smith}
\author{Isabel Vogt}

\date{\today}

\begin{document}
\begin{abstract}
In this paper we investigate an arithmetic analogue of the gonality of a smooth projective curve $C$ over a number field $k$: the minimal $e$ such there are infinitely many points $P \in C(\bar{k})$ with $[k(P):k] \leq e$.  Developing techniques that make use of an auxiliary smooth surface containing the curve, we show that this invariant can take any value subject to constraints imposed by the gonality.  Building on work of Debarre--Klassen, we show that this invariant is equal to the gonality for all sufficiently ample curves on a surface $S$ with trivial irregularity.
\end{abstract}

\maketitle
\blfootnote{This is a pre-copyedited, author-produced version of an article accepted for publication in IMRN following peer review. The version of record, "Low degree points on curves", Int. Math. Res. Not.  \textbf{2022.1} (2022), 422--445, is available online at: https://doi.org/10.1093/imrn/rnaa137}

\section{Introduction}

Let $C$ be a nice (smooth, projective and geometrically integral) curve over a number field $k$.  For an algebraic point $P \in C(\bar{k})$, the \defi{degree of $P$} is the degree of the residue field extension $[k(P) : k]$. In this paper we investigate the sets
\[C_e \colonequals \left\{ P \in C(\bar{k}) : \deg(P) \leq e \right\} =  \bigcup_{[F:k] \leq e} C(F)\]
of algebraic points on $C$ with residue degree bounded by $e$.  

When $e=1$, this is the set of $k$-rational points on $C$.  If the genus of $C$ is 0 or 1, then there is always a finite extension $K/k$ of the base field over which $(C_K)_1 = C(K)$ is infinite.  On the other hand, if the genus of $C$ is at least 2, then for all finite extensions $K/k$, Faltings' theorem guarantees that the set $(C_K)_1$ is finite \cite{faltings2}.  While understanding the set of rational points is an interesting and subtle problem, here we will be primarily concerned with the infinitude of the sets $C_e$ as $e$ varies.  Define the \defi{arithmetic degree of irrationality} to be
\[\airr_k(C) \colonequals \min(e : \text{$C_e$ is infinite}).\]
This invariant is \emph{not} preserved under extension of the ground field, 
so we also define 
\[\airr_{\bar{k}}(C) \colonequals \min(e  : \text{there exists a finite extension $K/k$ with $(C_K)_e$ infinite}).\]
As is implicit in the notation, this notion depends only upon the $\bar{k}$-isomorphism class of $C$, see Remark \ref{geometric_property}.
The situation for $k$-points can therefore be summarized as: 
\[\airr_{\bar{k}}(C) = 1 \Leftrightarrow \text{genus of $C \leq 1$}\]

For $e \geq 2$, the situation for higher genus curves is more interesting.  Recall that the \defi{$k$-gonality} of $C/k$,
\[ \gon_k(C) \colonequals \min(e  : \text{there exists a dominant map $C \to \pp^1_{k}$ of degree $e$} ), \] 
is a measure of the ``geometric degree of irrationality" of $C$.  This notion is also not invariant under extension of the base field (e.g.,~a genus $0$ curve has $k$-gonality $1$ if and only if it has a $k$-point).  For that reason we also define the \defi{geometric gonality} to be $\gon_{\bar{k}}(C) \colonequals \gon_{\bar{k}}(C_{\bar{k}})$, which is stable under algebraic extensions.  If $f \colon C \to \pp^1_k$ is dominant of degree at most $e$, then $f^{-1}(\pp^1(k)) \subset C_e$.  Therefore we always have the upper bound
\begin{equation}\label{upper_bound}
\airr_k(C) \leq \gon_k(C). \end{equation}
This bound need not always be sharp: if $f \colon C\to E$ is a dominant map of degree at most $e$ onto a positive rank elliptic curve $E$, then $f^{-1}(E(k)) \subset C_e$ is also infinite.  When $e=2$ (resp.~$e=3$) then Harris--Silverman and Hindry \cite{harris_silverman, hindry} (resp.~Abramovich--Harris \cite{harris_abramovich}) showed 
\[\airr_{\bar{k}}(C) = e \Leftrightarrow \text{$e$ is minimal such that $C_{\bar{k}}$ is a degree $e$ cover of a curve of genus $\leq 1$}. \]
Debarre--Fahlaoui \cite{debarre_2} gave examples of curves lying on projective bundles over an elliptic curve that show the analogous result is \textit{false} for all $e \geq 4$.  The arithmetic degree of irrationality is therefore a subtle invariant of a curve, capturing more information than only low degree maps.

Implicit in the work of Abramovich--Harris \cite{harris_abramovich} and explicit in a theorem of Frey \cite{frey}, is the fact that Faltings' theorem implies that if $C_e$ is infinite, then $C$ admits a map of degree at most $2e$ onto $\pp^1_k$.  Therefore we have an inequality in both directions
\begin{equation}\label{range} \gon_k(C)/2 \leq \airr_k(C) \leq \gon_k(C). \end{equation}

In this paper, we develop and apply geometric techniques to compute $\airr_k(C)$ and $\gon_k(C)$ when $C$ lies on a smooth auxiliary surface $S$.  The first result in this direction is that the inequalities in \eqref{range} are sharp, and that subject to these bounds, we may decouple $\airr_k(C)$ and $\gon_k(C)$.

\begin{ithm}\label{every_value}
Given any number field $k$ and a pair of integers $\alpha, \gamma \geq 1$, there exists a nice curve $C/k$ such that 
\begin{equation}\label{alpha_gamma_equalities}\airr_k(C) = \airr_{\bar{k}}(C) = \alpha, \qquad \gon_k(C) = \gon_{\bar{k}}(C) = \gamma\end{equation}
if and only if $\gamma/2 \leq \alpha \leq \gamma$.
In fact, for $\gamma \geq 4$, the equalities \eqref{alpha_gamma_equalities} are satisfied for all smooth curves in numerical class $(\gamma, \alpha)$ on $S = E\times \pp^1_k$, where $E/k$ is a positive-rank elliptic curve.
\end{ithm}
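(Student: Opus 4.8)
The plan is to derive the ``only if'' direction immediately from \eqref{range}, and to prove the ``if'' direction by checking all four equalities for smooth curves on $S = E\times\pp^1_k$. Write $F$ for the class of a fiber of $\pi_1\colon S\to E$ (a line) and $G$ for the class of a fiber of $\pi_2\colon S\to\pp^1$ (a copy of $E$), so that $F^2=G^2=0$, $F\cdot G=1$, and $K_S=-2G$. A smooth curve $C$ in the class $\gamma F+\alpha G$ then satisfies $\deg(\pi_2|_C)=C\cdot G=\gamma$ and $\deg(\pi_1|_C)=C\cdot F=\alpha$, and adjunction gives $g(C)=\gamma(\alpha-1)+1$. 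For $\gamma\ge 2$ this class is base-point free, so smooth geometrically integral members exist by Bertini and form the asserted family. The projection $\pi_2$ gives $\gon_k(C)\le\gamma$, and since $E$ has positive rank over $k$, pulling back $E(k)$ along $\pi_1$ produces infinitely many points of degree $\le\alpha$, whence $\airr_k(C)\le\alpha$. As $\gon_{\bar k}\le\gon_k$ and $\airr_{\bar k}\le\airr_k$ always hold, it remains to prove the two geometric lower bounds $\gon_{\bar k}(C)\ge\gamma$ and $\airr_{\bar k}(C)\ge\alpha$; the four equalities then follow by sandwiching. The ``only if'' direction is immediate from \eqref{range}.

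For the gonality bound I would apply the Castelnuovo--Severi inequality to a hypothetical degree-$d$ map $\phi\colon C\to\pp^1$ together with $\pi_1|_C\colon C\to E$ (degree $\alpha$, target genus $1$): assuming the product map $C\to E\times\pp^1$ is birational onto its image, this yields $g(C)\le \alpha+(\alpha-1)(d-1)$. Substituting $g(C)=\gamma(\alpha-1)+1$ and using $\alpha\ge 2$ (which holds since $\alpha\ge\gamma/2\ge 2$ for $\gamma\ge 4$) forces $d\ge\gamma$, so $\gon_{\bar k}(C)\ge\gamma$. If the two maps instead factor through a common cover $C\to C'$ of degree $m\ge 2$, I would pass to the induced primitive pair on $C'$ and rerun the inequality against Riemann--Hurwitz for $C\to C'$; the divisibility constraints $m\mid\alpha$ and $m\mid d$ make this case routine.

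For the arithmetic lower bound I argue by contradiction. If $(C_K)_{\alpha-1}$ were infinite for some finite $K/k$, then (after a pigeonhole on the exact degree $d\le\alpha-1$) some $C^{(d)}(K)$ is infinite, and Faltings' theorem \cite{faltings2} applied to the image of the natural map $\rho_d\colon C^{(d)}\to\Pic^d(C)$ produces a positive-dimensional translate $t+A$ of an abelian subvariety $A\subseteq\Jac(C)$, of positive rank over $K$, contained in $W_d\colonequals\rho_d(C^{(d)})$. Since $d\le\alpha-1<\gamma=\gon_{\bar k}(C)$, every effective divisor of degree $d$ has $h^0=1$, so $\rho_d$ is birational onto $W_d$ and $C$ carries no pencil of degree $\le d$; in particular $A$ embeds into $C^{(d)}$. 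When $\dim A=1$, the standard dictionary between translated elliptic curves in $W_d$ and elliptic quotients gives a nonconstant map $\psi\colon C\to E'$ of degree $\le d\le\alpha-1$. Comparing $\psi$ with $\pi_2|_C$ (degree $\gamma$, target genus $0$) via Castelnuovo--Severi gives $g(C)\le\deg(\psi)+(\gamma-1)(\deg(\psi)-1)$, and with $g(C)=\gamma(\alpha-1)+1$ this forces $\deg(\psi)\ge\alpha$, a contradiction.

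The main obstacle is the case $\dim A\ge 2$: this is precisely the higher-dimensional phenomenon used by Debarre--Fahlaoui \cite{debarre_2} to produce small $\airr_{\bar k}$ with no low-degree map to a curve of genus $\le 1$, so it cannot be dispensed with formally and must be defeated using the special geometry of $S=E\times\pp^1_k$. The idea I would pursue is that the irregularity of $S$ is accounted for exactly by the factor $E$, so that any abelian subvariety of $\Jac(C)$ beyond $\pi_1^*E$ is forced into the Prym part; I expect its appearance in $W_d$ with $d\le\alpha-1$ to be bounded below by an inductive Castelnuovo--Severi and intersection-theoretic estimate against the ruling $\pi_2$, producing an elliptic quotient of degree $\le d$ and thereby reducing to the previous paragraph. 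This is where I anticipate the ampleness hypothesis $\gamma\ge 4$ (and the resulting slack in $\alpha\ge\gamma/2\ge 2$) to be essential. Finally, the finitely many small cases $\gamma\le 3$, where base-point-freeness and the Castelnuovo--Severi inputs degenerate, I would treat directly by the explicit low-genus examples permitted by the classifications of Harris--Silverman \cite{harris_silverman} and Abramovich--Harris \cite{harris_abramovich}.
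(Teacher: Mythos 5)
Your overall architecture—deriving the ``only if'' direction from \eqref{range}, getting the upper bounds $\gon_k(C)\le\gamma$ and $\airr_k(C)\le\alpha$ from the two projections, reducing to the geometric lower bounds $\gon_{\bar{k}}(C)\ge\gamma$ and $\airr_{\bar{k}}(C)\ge\alpha$, and handling $\gamma\le 3$ via Harris--Silverman and Abramovich--Harris—matches the paper. Your Castelnuovo--Severi route to the gonality bound differs from the paper's (which deduces it from Proposition \ref{lin_series_prop}, i.e.\ from Bogomolov's inequality applied to the kernel of $\O_S^{\oplus 2}\to i_*\O_C(\Gamma)$); it is a plausible alternative for this surface, though the common-factorization case is not as routine as you assert, since Riemann--Hurwitz bounds $g(C)$ from \emph{below} in terms of $g(C')$ and therefore does not chain with the Castelnuovo--Severi upper bound on $g(C')$ in the direction you need.

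The genuine gap is in the arithmetic lower bound, exactly where you flag it, and it is fatal as written. Your $\dim A=1$ case rests on a ``standard dictionary between translated elliptic curves in $W_d$ and elliptic quotients'' of degree $\le d$; no such dictionary exists. The claim that an elliptic curve in $W_dC$ forces a map $C\to E'$ of degree $\le d$ is precisely what Debarre--Fahlaoui \cite{debarre_2} disproved for $d\ge 4$, and here $d$ runs up to $\alpha-1$, which is unbounded. All that holds in general is Lemma \ref{produce_bpf_pencil}: $A_2\subseteq W_{2d}C$ consists of basepoint-free pencils of degree $2d$. Your $\dim A\ge 2$ case is then left entirely as a program (``I expect\dots'', ``the idea I would pursue\dots''), and this is the heart of the theorem. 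The paper closes both cases uniformly, with no dimension split: each $p\in A_2$ gives a basepoint-free pencil $\Gamma_p$ of degree $2e\le 2\alpha-2<C^2/4=\alpha\gamma/2$ (this is where $\gamma\ge 4$ enters); Proposition \ref{lin_series_prop} produces an effective divisor $D_p$ on $S$ with $h^0\ge 2$, $C\cdot D_p<C^2/2$, $D_p\cdot(C-D_p)\le 2e$, and $\O_C(D_p|_C-\Gamma_p)$ effective; the N\'eron--Severi lattice of $E\times\pp^1$ forces $D_p$ into class $F_2$ or $F_1+F_2$; and a K\"unneth computation then shows every $\Gamma_p$ equals $\O_{\pp^1}(1)|_C$, contradicting $\dim A>0$ (Theorem \ref{every_value_geometric}(c)). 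Without an argument of this kind—some mechanism that actually defeats the Debarre--Fahlaoui phenomenon on this specific surface—your proof of $\airr_{\bar{k}}(C)\ge\alpha$ does not go through.
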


Using these geometric techniques, we next describe classes of curves where the arithmetic and geometric degrees of irrationality agree; that is, where there are as few points as allowed by the gonality.  In such cases, we have the strongest finiteness statements on low degree points.

The first explicit examples of this kind were given by Debarre and Klassen for smooth plane curves $C/k$ of degree $d$ sufficiently large.  Max Noether calculated the gonality for $d \geq 2$:
\begin{enumerate}
\item If $C(k) \neq \emptyset$, then $\gon_k(C) = d-1$, and all minimal degree maps are projection from a $k$-point of $C$, and
\item If $C(k) = \emptyset$, then $\gon_k(C) = d$.
\end{enumerate}
For smooth plane curves of degree $d \geq 8$, Debarre--Klassen \cite{debarre} prove an arithmetic strengthening of this result:
\begin{enumerate}
\item If $C(k) \neq \emptyset$, then $C_{d-2}$ is finite, and so $\airr_k(C) = \gon_k(C) = d-1$.  Furthermore, all but finitely many points of degree $d-1$ come from intersecting $C$ with a line over $k$ through a $k$-point of $C$.
\item If $C(k) = \emptyset$, then $C_{d-1}$ is finite, and so $\airr_k(C) = \gon_k(C) = d$.
\end{enumerate}

We generalize this result to smooth curves on other surfaces $S$.  The key property of $\pp^2$ that we need in general is that it has discrete Picard group; i.e., in the classical language of surfaces, it has irregularity $0$.  The explicit condition $d\geq 8$ can be replaced by requiring that the class of $C$ is ``sufficiently positive" in the ample cone in the sense that it is sufficiently far from the origin, and sufficiently far from the boundary of the ample cone.

\begin{ithm}\label{main_q0}
Let $S/k$ be a nice surface with $h^1(S, \O_S)=0$. If $C/k$ is a smooth curve in an ample class on $S$, then  
\[\airr_k(C) \geq \min\left( \gon_k(C), \frac{C^2}{9}\right). \]
In particular, let $P$ be a very ample divisor on $S$, and define the set
\[\Exc_P \colonequals \big\{\text{integral classes $H$ in $\Amp(S)$ such that } H^2 \leq 9(H \cdot P-1)\big\}.\]
\begin{enumerate}
\item If $C \subset S$ is a smooth curve with class $[C] \in \Amp(S) \smallsetminus \Exc_P$, then $\airr_k(C) = \gon_k(C)$.
\item For any closed subcone $N \subseteq \Amp(S)$, the set
$\Exc_P(N) \colonequals \Exc_P \cap N$
is finite.
\end{enumerate}
\end{ithm}

As an immediate consequence, we obtain an effective generalization of the Debarre-Klassen result to other surfaces with $h^1(S,\O_S)=0$.

\begin{icor}\label{main_pic1}
Suppose that $C$ embeds in a nice surface $S/k$ having $h^1(S,\O_S)=0$, with $\O_S(1)$ very ample and $C \in |\O_S(\alpha)|$.  If 
\[\alpha \geq \begin{cases} 8 & : \ \O_S(1)^2=1\\ 9 & : \ \text{otherwise,} \end{cases}\]  
then $\airr_k(C)  = \gon_k(C)$.
\end{icor}

\begin{icor}
Under the hypotheses of Corollary \ref{main_pic1}, if $S$ satisfies $\mathrm{Pic}(S_{\overline{k}})=\zz \cdot \O_S(1)$, then there are finitely many points of degree strictly less than $(\alpha-1)\O_S(1)^2$ on $C_K$ for any finite extension $K/k$.
\end{icor}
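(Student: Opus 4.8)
The plan is to convert the statement into a single inequality on the arithmetic degree of irrationality and then to reduce that to a geometric gonality bound. Fix a finite extension $K/k$. Since every residue degree is an integer and $(\alpha-1)\,\O_S(1)^2 \in \zz$, the set of points of $C_K$ of degree strictly less than $(\alpha-1)\,\O_S(1)^2$ is exactly $(C_K)_m$ with $m \colonequals (\alpha-1)\,\O_S(1)^2 - 1$. By the definition of $\airr_K$ as the least $e$ with $(C_K)_e$ infinite, this set is finite precisely when $m < \airr_K(C_K)$, i.e. when
\[
\airr_K(C_K) \;\geq\; (\alpha-1)\,\O_S(1)^2 .
\]
So it suffices to prove this inequality for every finite extension $K/k$.

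The hypotheses of Corollary \ref{main_pic1} are all geometric: the equality $\Pic(S_{\bar k}) = \zz\cdot\O_S(1)$ is formulated over $\bar k$ and hence insensitive to $K$, while the very ampleness of $\O_S(1)$, the smoothness of $C$, and the class $[C]=\alpha\,\O_S(1)$ are preserved under the base change along $K/k$. Applying Corollary \ref{main_pic1} over $K$ therefore gives $\airr_K(C_K) = \gon_K(C_K)$, and since any dominant degree-$e$ morphism $C_K \to \pp^1_K$ is in particular one over $\bar k$, we have $\gon_K(C_K) \geq \gon_{\bar k}(C)$. Thus everything reduces to the purely geometric claim
\[
\gon_{\bar k}(C) \;\geq\; (\alpha-1)\,\O_S(1)^2 .
\]

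To establish the gonality bound I would work over $\bar k$ and rule out every pencil of small degree at once via the Lazarsfeld--Mukai/Bogomolov method, which is exactly where the cyclicity of $\Pic(S_{\bar k})$ does the work. Suppose $e \colonequals \gon_{\bar k}(C) < (\alpha-1)\,\O_S(1)^2$, and let $|A|$ be a base-point-free $g^1_e$. Attached to this pencil is a rank-two bundle $\mathcal{E}$ on $S$ with $c_1(\mathcal{E}) = [C] = \alpha\,\O_S(1)$ and $c_2(\mathcal{E}) = e$; here the vanishing $h^1(S,\O_S)=0$ (forced by $\Pic(S_{\bar k})=\zz\cdot\O_S(1)$) is what makes the construction and its cohomology behave. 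Its Bogomolov discriminant is
\[
c_1(\mathcal{E})^2 - 4c_2(\mathcal{E}) = \alpha^2\,\O_S(1)^2 - 4e > \bigl(\alpha^2 - 4(\alpha-1)\bigr)\O_S(1)^2 = (\alpha-2)^2\,\O_S(1)^2 \geq 0,
\]
so $\mathcal{E}$ is Bogomolov unstable. The maximal destabilizing subsheaf is a line bundle, which by $\Pic(S_{\bar k})=\zz\cdot\O_S(1)$ must be $\O_S(a)$, giving a sequence $0 \to \O_S(a) \to \mathcal{E} \to \O_S(b)\otimes I_Z \to 0$ with $a+b=\alpha$ and $a>b$, whence
\[
e = c_2(\mathcal{E}) = ab\,\O_S(1)^2 + \len(Z) \geq ab\,\O_S(1)^2 .
\]
Once one knows $1 \leq b \leq a \leq \alpha-1$, the minimum of $ab$ subject to $a+b=\alpha$ is attained at $(a,b)=(\alpha-1,1)$, giving $e \geq (\alpha-1)\,\O_S(1)^2$, a contradiction.

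The main obstacle is precisely the last positivity input: ruling out the degenerate destabilizations with $b\leq 0$ (equivalently $a \geq \alpha$), and absorbing the adjunction/$K_S$-corrections in the construction of $\mathcal{E}$, so that the clean bound $(\alpha-1)\,\O_S(1)^2$ emerges rather than one twisted by $K_S$. I expect the global-generation properties of the Lazarsfeld--Mukai bundle (again relying on $h^1(S,\O_S)=0$) to pin down $b\geq 1$, and I expect the slack $(\alpha-2)^2\,\O_S(1)^2$ in the Bogomolov inequality — which is where the hypothesis $\alpha \geq 9$ enters — to provide the room needed to carry this through. Granting $\gon_{\bar k}(C) \geq (\alpha-1)\,\O_S(1)^2$, the chain $\airr_K(C_K)=\gon_K(C_K)\geq \gon_{\bar k}(C) \geq (\alpha-1)\,\O_S(1)^2$ holds for every $K/k$, completing the argument. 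As a consistency check, taking $S=\pp^2$ gives $\O_S(1)^2 = 1$ and the bound $\alpha - 1 = d-1$, recovering the Debarre--Klassen finiteness of points of degree below $d-1$.
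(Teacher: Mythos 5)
Your reduction matches the paper's: the statement is equivalent to $\airr_K(C_K) \geq (\alpha-1)\O_S(1)^2$ for every finite $K/k$, which follows from Corollary \ref{main_pic1} (whose hypotheses are indeed stable under base change --- a point the paper's two-line proof leaves implicit, and which you handle correctly) together with the purely geometric bound $\gon_{\bar k}(C) \geq (\alpha-1)\O_S(1)^2$. The paper simply cites \cite[Lemma 4.4]{ullery} for that gonality bound; you instead set out to reprove it, and that is where your argument is incomplete. The Bogomolov computation and the reduction to a sequence $0 \to \O_S(a) \to \mathcal{E} \to \O_S(b)\otimes I_Z \to 0$ with $a+b=\alpha$, $a>\alpha/2$, and $e \geq ab\,\O_S(1)^2 + \len(Z)$ are fine, but the step you flag --- ruling out $b \leq 0$ --- is genuinely the crux: if $b \leq 0$ the inequality $e \geq ab\,\O_S(1)^2$ is vacuous, and ``global generation of the Lazarsfeld--Mukai bundle'' is an expectation, not a proof. (Also, $\alpha \geq 9$ plays no role in this part: the slack $(\alpha-2)^2\O_S(1)^2 \geq 0$ is free for all $\alpha$, and $\alpha \geq 9$ is consumed only by Corollary \ref{main_pic1} itself.)

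The missing step is, however, exactly Property (1) of Proposition \ref{lin_series_prop}: in the dual formulation used there, the maximal destabilizing sub-line-bundle $\O(-D)$ of $F=\mathcal{E}^\vee$ satisfies $h^0(S,D)\geq 2$, which in the Picard rank one setting forces $D = b\O_S(1)$ with $b \geq 1$; the proof is the short argument that $\O_S\oplus\O_D$ cannot surject onto $i_*\O_C(\Gamma)$. So rather than rebuilding the Lazarsfeld--Mukai bundle from scratch, apply Proposition \ref{lin_series_prop} directly to a minimal-degree basepoint-free pencil $\Gamma$; the hypothesis $\deg\Gamma < C^2/4$ holds because $(\alpha-1)\O_S(1)^2 \leq \alpha^2\O_S(1)^2/4$, i.e.\ $(\alpha-2)^2 \geq 0$. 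Properties (1)--(3) then give $1 \leq b < \alpha/2$ and $e \geq b(\alpha-b)\O_S(1)^2 \geq (\alpha-1)\O_S(1)^2$. With that substitution (or with the citation the paper uses) your argument closes and agrees with the paper's.
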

\begin{proof}
By \cite[Lemma 4.4]{ullery}, $(\alpha-1)\O_S(1)^2 \leq \gon_{\bar{k}}(C) \leq \gon_k(C)$.  Therefore by Corollary \ref{main_pic1},
\[(\alpha-1)\O_S(1)^2 \leq \airr_k(C). \qedhere\]
\end{proof}

Corollary \ref{main_pic1} combined with \cite[Theorem 3.1]{ullery} is enough to deduce the analogous result for most complete intersection curves in $\pp_k^n$, generalizing in another direction Debarre and Klassen's original result when $n=2$:

\begin{icor}\label{main_ci}
Let $C/k$ be a smooth complete intersection curve in $\pp_k^n$, $n \geq 3$, of type $9 \leq d_1 < d_2\leq \cdots \leq d_{n-1}$.  Then 
\[\airr_k(C) = \gon_k(C). \]
In particular, by Lazarsfeld's computation of the minimal gonality of such a curve \cite[Exercise 4.12]{lazars}, there are finitely many points of degree strictly less than $(d_1-1)d_2 \cdots d_{n-1}$ on $C_K$ for any finite extension $K/k$.
\end{icor}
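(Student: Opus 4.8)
The plan is to exhibit $C$ as a very ample divisor on an auxiliary surface $S$ with $\Pic(S_{\bar k}) = \zz \cdot \O_S(1)$ and then apply Corollary \ref{main_pic1}. Write $C = V(F_1, \dots, F_{n-1})$ with $\deg F_i = d_i$. I would build $S$ by discarding the equation of smallest degree: take $S$ to be a complete intersection surface of type $(d_2, \dots, d_{n-1})$ cut out by forms in the homogeneous ideal of $C$, so that $C$ is the zero locus on $S$ of a form of degree $d_1$ and thus $C \in |\O_S(d_1)|$. Here $\O_S(1)$ is the restriction of the hyperplane class and so is very ample, and $\alpha \colonequals d_1 \geq 9$ meets the hypothesis of Corollary \ref{main_pic1}. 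Discarding the smallest equation is essentially forced: the degree-$d_1$ part of the homogeneous ideal of $C$ is spanned by $F_1$ alone, so any surface retaining an equation of degree $d_1$ is rigid, whereas dropping $F_1$ leaves room to move; moreover this choice makes the lower bound $(\alpha-1)\O_S(1)^2 = (d_1-1)d_2\cdots d_{n-1}$ of \cite[Lemma 4.4]{ullery} coincide exactly with Lazarsfeld's gonality value.

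The crux is producing such an $S$ with $\Pic(S_{\bar k}) = \zz \cdot \O_S(1)$, and this is exactly what \cite[Theorem 3.1]{ullery} supplies. Because $d_1 < d_2$, the degree-$d_2$ forms vanishing on $C$ contain all products $F_1 \cdot \ell$ with $\ell$ linear, so the complete intersection surfaces of type $(d_2, \dots, d_{n-1})$ through $C$ vary in a positive-dimensional family and are not trapped inside the Noether--Lefschetz locus; a Noether--Lefschetz argument on this family then yields a smooth member whose geometric Picard group is generated by $\O_S(1)$. I expect this to be the main obstacle, since one must guarantee Picard rank $1$ for a surface constrained to contain the \emph{fixed} curve $C$ rather than for a general surface of a given type, and it is precisely the strict inequality $d_1 < d_2$ that makes the family large enough for the argument to succeed. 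A secondary point is to arrange $S$ over $k$ itself, which is a routine density argument since the Picard-rank-$1$ members are dense in the rational parameter space of such surfaces and $k$ is a number field.

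Granting $S$, Corollary \ref{main_pic1} immediately gives $\airr_k(C) = \gon_k(C)$, the main assertion. For the finiteness statement I would first record Lazarsfeld's value $\gon_{\bar k}(C) = (d_1-1)d_2\cdots d_{n-1}$ from \cite[Exercise 4.12]{lazars}. For any finite extension $K/k$, the base change $C_K$ is again a smooth complete intersection of the same type $9 \leq d_1 < d_2 \leq \cdots \leq d_{n-1}$, so the first part applied over $K$, together with the fact that gonality can only drop under field extension, yields
\[\airr_K(C_K) = \gon_K(C_K) \geq \gon_{\bar k}(C) = (d_1-1)d_2\cdots d_{n-1}.\]
By the definition of the arithmetic degree of irrationality this means that $(C_K)_e$ is finite for every $e < (d_1-1)d_2\cdots d_{n-1}$, which is the claim.
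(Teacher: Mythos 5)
Your overall route is the same as the paper's: realize $C$ on a complete intersection surface $S$ of type $(d_2,\dots,d_{n-1})$ with $\Pic(S)=\zz\cdot\O_S(1)$ via \cite[Theorem 3.1]{ullery}, then feed this into Corollary \ref{main_pic1}, and extract the finiteness statement from Lazarsfeld's gonality bound. That skeleton is correct, and your handling of the final finiteness claim (base change to $K$, gonality only drops under extension) is fine.

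There is, however, a genuine gap in the step where you ``arrange $S$ over $k$ itself'' by a ``routine density argument.'' The locus of surfaces of type $(d_2,\dots,d_{n-1})$ through $C$ with geometric Picard rank $>1$ is a \emph{countable} union of proper closed subvarieties of the parameter space (the Noether--Lefschetz components), and the set of $k$-rational points of that parameter space is itself countable. Density of the complement (in the Zariski or analytic topology over $\cc$) therefore gives no $k$-point outside the Noether--Lefschetz locus: a countable union of proper subvarieties can a priori swallow every $k$-point. Producing Picard-rank-one members over a number field is a genuinely harder problem (of the type addressed by Terasoma and others), not a density argument. The paper sidesteps this entirely: Lemma \ref{airr_eq} reduces the inequality $\airr_k(C)\geq \gon_k(C)$ to the nonexistence of positive-dimensional abelian subvarieties of $W_e(C_{\bar k})=W_e(C_\cc)$ for $e<\gon_k(C)\leq d_1d_2\cdots d_{n-1}$, which is a purely geometric statement; one then only needs the auxiliary surface to exist over $\cc$, which is exactly what \cite[Theorem 3.1]{ullery} provides. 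You should restructure your argument in this order (reduce to geometry first, then invoke Ullery over $\cc$) rather than trying to descend $S$ to $k$. A minor further point: you only need Lazarsfeld's \emph{lower} bound $\gon_{\bar k}(C)\geq (d_1-1)d_2\cdots d_{n-1}$, not equality, for the finiteness claim.
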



For any surface $S$ and any finite polyhedral subcone $N \subseteq \Amp(S)$, the set $\Exc_P(N)$ in Theorem \ref{main_q0} is effectively computable.
Given some particular surface $S$, our techniques are amenable to explicit computations, and can sometimes yield a full computation of all classes $[C] \in \Amp(S)$ for which $\airr_k(C)$ is strictly less than $\gon_k(C)$.  For example:

\begin{ithm}\label{main_p1xp1}
Let $C$ be a nice curve of type $(d_1, d_2)$, with $1 \leq d_1 \leq d_2$, on $\pp^1_k \times \pp^1_k$.  Then if $(d_1,d_2) \neq (2,2)$ or $(3,3)$, we have that $\airr_k(C) = \gon_k(C) = d_1$.  In particular, this lets us compute:
\[ \airr_{\bar{k}}(C) = \begin{cases} 1 &: \ d_1 \leq  1 \text{ or } (d_1,d_2) = (2,2), \\
2 & : \ d_1 = 2 \text{ and } d_2 \geq 3, \text{ or } (d_1,d_2) = (3,3) \text{ and $C$ bielliptic,} \\
d_1 & : \ \text{otherwise}. \end{cases} \]
\end{ithm}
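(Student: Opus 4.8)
The plan is to combine an intersection-theoretic computation of the numerical invariants of $C$ with Theorem~\ref{main_q0}, using the integrality of $\airr_k(C)$ to push the bound up to $d_1$ in the borderline cases, and then to read off $\airr_{\bar{k}}(C)$ by running the same argument over every finite extension together with the $e=2$ classification of Harris--Silverman and Abramovich--Harris. First I would record the invariants: writing $[C] = d_1 F_1 + d_2 F_2$ in terms of the two ruling classes (with $F_i^2 = 0$ and $F_1 \cdot F_2 = 1$) gives $C^2 = 2 d_1 d_2$, and adjunction with $K_S = -2F_1 - 2F_2$ gives genus $g(C) = (d_1-1)(d_2-1)$. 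Next I would establish $\gon_k(C) = \gon_{\bar{k}}(C) = d_1$. The second projection $\pi_2 \colon C \to \pp^1_k$ has degree $C \cdot F_2 = d_1$ and is defined over $k$, so $\gon_k(C) \leq d_1$. For the matching lower bound $\gon_{\bar{k}}(C) \geq d_1$ I would use the Castelnuovo--Severi inequality: a degree-$e$ pencil $f \colon C \to \pp^1$ with $e < d_1 \leq d_2$ would have to be dependent with each ruling $\pi_1, \pi_2$ (otherwise Castelnuovo--Severi forces $g(C) \leq (e-1)(d_i-1) < (d_1-1)(d_2-1)$); chasing the resulting common covers, and using that $\pi_1, \pi_2$ are themselves independent since their product is the embedding $C \hookrightarrow \pp^1 \times \pp^1$, yields a contradiction. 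Alternatively this gonality computation is classical for curves on a smooth quadric.

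With $\gon_k(C) = d_1$ in hand, Theorem~\ref{main_q0} gives $\airr_k(C) \geq \min\bigl(d_1, \tfrac{C^2}{9}\bigr) = \min\bigl(d_1, \tfrac{2 d_1 d_2}{9}\bigr)$. When $d_2 \geq 5$ the minimum equals $d_1$ outright. When $d_2 \leq 4$ the minimum is the possibly fractional quantity $2 d_1 d_2/9$, and here I would invoke that $\airr_k(C)$ is an integer, so $\airr_k(C) \geq \lceil 2 d_1 d_2 / 9 \rceil$; one checks that $\lceil 2 d_1 d_2/9 \rceil \geq d_1$, equivalently $2 d_1 d_2 > 9(d_1 - 1)$, holds for every pair with $d_1 \leq d_2 \leq 4$ \emph{except} exactly $(d_1, d_2) = (2,2)$ and $(3,3)$. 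Thus for all $(d_1, d_2) \neq (2,2), (3,3)$ we get $\airr_k(C) \geq d_1$, and combined with $\airr_k(C) \leq \gon_k(C) \leq d_1$ this forces $\airr_k(C) = \gon_k(C) = d_1$. The emergence of the two exceptional pairs as precisely those where the integrality rounding fails is the crux of the first assertion.

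For the value of $\airr_{\bar{k}}(C)$ I would run the same argument over an arbitrary finite extension $K/k$: both the gonality computation ($\gon_K(C_K) = d_1$, since $\pi_2$ is defined over $k$ and geometric gonality is $d_1$) and Theorem~\ref{main_q0} are valid over any number field, so $\airr_K(C_K) = d_1$ for all $K$ whenever $(d_1,d_2) \neq (2,2), (3,3)$, giving $\airr_{\bar{k}}(C) = d_1$. The remaining entries come from genus considerations and the $e \leq 2$ theory. If $d_1 \leq 1$ or $(d_1,d_2) = (2,2)$ then $g \leq 1$, so $\airr_{\bar{k}}(C) = 1$; if $d_1 = 2$ and $d_2 \geq 3$ then $g = d_2 - 1 \geq 2$ forces $\airr_{\bar{k}}(C) \geq 2$, while $\airr_{\bar{k}}(C) \leq \airr_k(C) = 2$, so the value is $2$. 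The delicate case is $(3,3)$, where $g = 4$ and the canonical class is $K_C = \O_C(1,1)$, so $C$ is non-hyperelliptic of gonality $3$ and the integrality argument is inconclusive; here I would apply the Harris--Silverman / Abramovich--Harris classification \cite{harris_silverman, harris_abramovich}, which says $(C_K)_2$ is infinite for some $K$ exactly when $C$ is a degree-$2$ cover of a curve of genus $\leq 1$. Since $C$ is not hyperelliptic, this happens if and only if $C$ is bielliptic, yielding $\airr_{\bar{k}}(C) = 2$ in the bielliptic case and $\airr_{\bar{k}}(C) = 3$ otherwise.

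The main obstacle I anticipate is twofold: making the dependent-pencil case of the Castelnuovo--Severi argument fully rigorous (or locating a clean citation) to pin down $\gon_{\bar{k}}(C) = d_1$, and handling $(3,3)$ with care—verifying non-hyperellipticity via $K_C = \O_C(1,1)$ and applying the quadratic-points classification correctly to isolate the bielliptic locus. Everything else is a bookkeeping combination of the projection bound, Theorem~\ref{main_q0}, and the integrality of $\airr_k$.
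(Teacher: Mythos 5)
Your proposal is correct, but it reaches the main equality $\airr_k(C)=\gon_k(C)=d_1$ by a genuinely different route than the paper. The paper does \emph{not} deduce Theorem \ref{main_p1xp1} from Theorem \ref{main_q0}: the bound $\min(\gon_k(C),C^2/9)$ is lossy on $\pp^1\times\pp^1$ for small $d_2$, so the paper instead proves the bespoke Proposition \ref{p1_times_p1}, rerunning the Bogomolov/destabilizing-subsheaf machinery of Proposition \ref{lin_series_prop} directly on $\pp^1\times\pp^1$ to pin the divisor $D$ down to class $(0,1)$, $(1,0)$, or $(1,1)$ and rule out abelian subvarieties of $W_eC$ for all $e<d_1$. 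Your observation that the general bound of Theorem \ref{main_q0}, sharpened by integrality of $\airr_k(C)$ (i.e.\ $\airr_k(C)\geq\lceil 2d_1d_2/9\rceil$), already yields $\airr_k(C)\geq d_1$ for every pair except exactly $(2,2)$ and $(3,3)$ is a nice shortcut, and I verified the case check: $2d_1d_2>9(d_1-1)$ fails only at those two pairs among $d_1\leq d_2\leq 4$, and $d_2\geq 5$ gives $2d_1d_2/9\geq d_1$ outright. What the paper's longer route buys is the stronger geometric statement (no positive-dimensional abelian subvarieties of $W_eC$ for \emph{all} $e<d_1$, which is what Remark \ref{p1xp1_sharp} shows is sharp), not just the arithmetic corollary. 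The one place where your sketch has a real gap is the gonality lower bound: in the Castelnuovo--Severi argument, when $f$ is dependent with $\pi_1$ and also dependent with $\pi_2$, the two intermediate covers need not coincide, so the independence of $\pi_1$ and $\pi_2$ does not immediately produce a contradiction; you flag this yourself, and the paper sidesteps it entirely by noting that $K_C=\O_C(d_1-2,d_2-2)$ is $(d_1-2)$-very ample (a tensor product of a $(d_1-2)$-very ample and a base-point-free bundle), which forces $\gon_{\bar k}(C)\geq d_1$ directly. Your treatment of the exceptional cases $(2,2)$ and $(3,3)$ and of $\airr_{\bar k}$ over all finite extensions matches the paper's.
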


\begin{irem}
We say that a point $P \in C(\bar{k})$ is \defi{sporadic} if $[k(P):k] < \airr_k(C)$.
From the perspective of the arithmetic of elliptic curves, there is much interest in understanding sporadic points on modular curves, e.g. the classical $X_1(N)$, since these indicate ``usual" level structure.  Since $X_1(N)(\qq) \neq \emptyset$, $X_1(N)$ is always a subvariety of its Jacobian variety $J_1(N)$.  And whenever $\#J_1(N)(\qq)$ is finite, we have that $\airr_\qq(X_1(N)) = \gon_\qq(X_1(N))$.  In particular this holds for $N\leq 55$ and $N \neq 37, 43, 53$ by work of Derickx and van Hoeij \cite{DvH}; in the same article, they compute the gonality (and therefore the arithmetic degree of irrationality when $N \neq 37$) for all $N\leq 40$.  It is our hope that the geometric techniques we develop here might prove useful for specific curves of arithmetic interest.
\end{irem}

As in previous work, the proofs of these results begin by translating the problem of understanding degree $e$ points on $C$ to understanding rational points on $\Sym^eC \equalscolon C^{(e)}$, which is a parameter space for effective divisors of degree $e$ on $C$.  There is a natural map
\[C^{(e)} \to \Pic^eC, \]
sending an effective divisor $D$ to the class of the line bundle $\O(D)$.  We denote the image of this map $W_e(C)$.  We now have two problems: understand the infinitude of rational points on the fibers of $C^{(e)} \to \Pic^e C$ (which is related to the dimension of the space of sections of the corresponding line bundle), and understand the infinitude of rational points on the image $W_e(C)$ (which, by Faltings' Theorem, is related to positive-dimensional abelian varieties in $W_e(C)$).

The majority of this paper is therefore devoted to proving purely geometric results over $\cc$ about nonexistence of positive-dimensional abelian varieties in $W_e(C)$ for appropriate $e$.  Using the theory of stability conditions on vector bundles, we show that such an abelian variety in $W_eC$ forces the existence of a certain type of effective divisor on $S$.  Given a particular surface $S$, we can often use the geometry of $S$ to obtain a contradiction; this is how we proceed with Theorem \ref{every_value}.  When the surface is not explicitly given, the fact that such a divisor class does not move in a positive dimensional family (from $h^1(S, \O_S)=0$) allows us to construct an embedding of the abelian variety into $W_fC$ for smaller $f$ and eventually obtain a contradiction.

\section{Abelian Varieties in \texorpdfstring{$W_eC$}{We(C)}}

In this section we prove purely geometric results (Theorems \ref{every_value_geometric} and \ref{main_geometric}) about nonexistence of abelian subvarieties that will imply our main theorems.  Therefore the basefield is assumed to be $\cc$, unless otherwise noted, and $\gon(C) \colonequals \gon_{\bar{k}}(C)$ denotes the geometric gonality.

The proofs of these results will proceed by contradiction: the existence of a positive-dimensional abelian variety $A \subset W_eC$ will force the existence of a family of effective divisors of moderately low degree moving in basepoint-free pencils.  We will then use a geometric lemma proved in  Section \ref{low_deg} to produce interesting effective divisors on an auxiliary surface containing the curve.  The proof ideas bifurcate here: when the auxiliary surface is specified explicitly, we may then directly use the geometry to obtain a contradiction.  When the surface is simply known to have $h^1(\O)=0$, we use the interesting effective divisor to inductively produce such a family of effective line bundles on $C$ of even lower degree that will force a contradiction for all but finitely many possible starting classes of curves $C$.

The first step in this procedure relies on the following observation, due originally to Abramovich and Harris \cite[Lemma 1]{harris_abramovich}, and whose consequence for the gonality of $C$ was noted by Frey \cite{frey}.  Assume that $A \subset W_e C$ is a translate of an abelian variety of dimension at least $1$ and $A \not\subset x + W_{e-1}C$ for any $x \in C$.  Let $A_2$ denote the image of $A \times A$ under the addition map $W_eC \times W_eC \to W_{2e}C$.  Note that $A_2$ is (noncanonically) isomorphic to $A$: a choice of basepoint in $A$ induces an isomorphism $\Pic^eC \simeq \Jac_C$, under which the addition map on $W_eC$ agrees with the group law on $\Jac_C$ and $A \subset \Jac_C$ is an abelian subvariety. 
\begin{lem}\label{produce_bpf_pencil}
The line bundle $L_p$ corresponding to a point $p\in A_2 \subseteq W_{2e}C \subseteq \Pic^{2e}C$ is basepoint-free and has 
\[h^0(C, L_p) -1 \geq \dim A.\]
\end{lem}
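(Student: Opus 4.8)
My plan is to exploit the group structure on $A$ to write each point of $A_2$ as a sum of two \emph{freely varying} effective divisor classes of degree $e$, and then to extract both conclusions from the single hypothesis $A \not\subset x + W_{e-1}C$. First I would fix a basepoint $L_0 \in A \subset \Pic^e C$ and tensor by $L_0^{-1}$ (resp. $L_0^{-2}$) to identify $A$ with an abelian subvariety $\bar A \subset \Jac_C$ through the origin and $A_2 \subset \Pic^{2e}C$ with $\bar A + \bar A = \bar A$. In additive notation this lets me write any $p \in A_2$ as $p = u + v$ with $u, v \in A$, and for every $c \in A$ rewrite $p = (u+c) + (v-c)$ with both $u+c$ and $v-c$ lying in $A \subset W_e C$. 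Since points of $W_e C$ are effective classes, each $c$ exhibits $L_p$ as a product of two degree-$e$ effective line bundles; choosing effective representatives $E_c \in |u+c|$ and $F_c \in |v-c|$ produces a divisor $E_c + F_c \in |L_p|$, and the strategy is to vary $c$.

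For basepoint-freeness I would fix $x \in C$ and observe that $\{c \in A : u+c \in x + W_{e-1}C\}$ and $\{c \in A : v-c \in x + W_{e-1}C\}$ are proper closed subvarieties of the irreducible variety $A$: indeed $u + A = v + A = A$, so if either locus were all of $A$ we would have $A \subset x + W_{e-1}C$, contradicting the hypothesis. Choosing $c_0$ outside both loci, no effective representative of $u+c_0$ or of $v-c_0$ passes through $x$, so $E_{c_0} + F_{c_0} \in |L_p|$ avoids $x$. As $x$ was arbitrary, $L_p$ is basepoint-free.

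For the dimension bound I first record that the hypothesis forces $A \not\subseteq W^1_e C$: if $h^0(L) \ge 2$ then $|L|$ has a member through every point of $C$, so $L \in x + W_{e-1}C$ for all $x$, and $A \subseteq W^1_e C$ would again contradict $A \not\subset x + W_{e-1}C$. Since $u + A = v + A = A \not\subseteq W^1_e C$, the locus $U \subset A$ on which both $u+c$ and $v-c$ admit a \emph{unique} effective representative is open and dense, and there (using that $C^{(e)} \to W_e C$ is an isomorphism away from $W^1_e C$) the assignments $c \mapsto E_c$ and $c \mapsto F_c$ are morphisms. I would then study $U \to |L_p| \subset C^{(2e)}$, $c \mapsto E_c + F_c$: because $c \mapsto u+c$ is injective into $\Pic^e C$ and $E_c$ recovers its class, $c \mapsto (E_c, F_c)$ is injective; composing with the finite addition map $C^{(e)} \times C^{(e)} \to C^{(2e)}$ makes $c \mapsto E_c + F_c$ finite-to-one. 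Hence its image in $|L_p|$ has dimension $\dim A$, giving $h^0(C, L_p) - 1 = \dim|L_p| \ge \dim A$.

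The part I expect to be most delicate is extracting from the one hypothesis $A \not\subset x + W_{e-1}C$ the two genuinely different geometric inputs needed above — the pointwise avoidance statement that drives basepoint-freeness, and the dense availability of unique effective representatives that drives the dimension count — and then confirming that the family $c \mapsto E_c + F_c$ does not collapse, i.e. that it really sweeps out a $\dim A$-dimensional subvariety of the linear system rather than a lower-dimensional one. This last point is exactly where the injectivity of $c \mapsto (E_c, F_c)$ and the finiteness of the symmetric-product addition map are essential, and it is the step I would write out most carefully.
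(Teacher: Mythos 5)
Your proof is correct and follows essentially the same strategy as the paper: you decompose $p$ as a varying sum $(u+c)+(v-c)$ of effective classes in $A$, derive basepoint-freeness from the hypothesis $A \not\subset x + W_{e-1}C$ together with irreducibility of $A$, and obtain the dimension bound from the finiteness of the addition map $C^{(e)} \times C^{(e)} \to C^{(2e)}$ over the $(\dim A)$-dimensional fiber of $A \times A \to A_2$. Your write-up is a more explicit unwinding of the paper's terse diagram argument (in particular, making precise the closedness and properness of the loci involved), but the underlying mechanism is identical.
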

\begin{proof}
Let $C^{(e)}$ denote the $e$th symmetric power of $C$.  We have the following commutative diagram
\begin{center}
\begin{tikzcd}
C^{(e)} \times C^{(e)} \arrow{r}{\text{finite}}[swap]{\text{dom.}} \arrow{d} & C^{(2e)} \arrow{d} \\
W_eC \times W_eC \arrow{r} & W_{2e}C	\\
A \times A \arrow[draw=none]{u}[sloped,auto=false]{\subseteq} \arrow{r} & A_2 \arrow[draw=none]{u}[sloped,auto=false]{\subseteq}
\end{tikzcd}
\end{center}
Given a point $p \in W_{2e}C$, the fibers of the map $C^{(2e)} \to W_{2e}C$ are of dimension $h^0(C,L_p) - 1$.  As the fibers of the bottom map $A \times A \to A_2$ are $(\dim A)$-dimensional, we see that if $p \in A_2$,  the fiber of $C^{(2e)} \to W_{2e}C$ over $p$ must be at least this large.  Furthermore, if $x \in C$ is in the base locus of this $(\dim A)$-dimensional linear system, then it would necessarily be the case that $x$ is always in the linear system parameterized by the points of $A$.  This is impossible, as we assumed that $A$ is not contained in a translate of $W_{e-1}C$.
\end{proof}

\subsection{Linear series of low degree}\label{low_deg}
In this section we prove the key geometric input on linear series of moderately low degrees on curves $C$ whose class is ample on a surface $S$.  This is a purely geometric result over an algebraically closed field of characteristic $0$.

We first recall some of the basic theory of torsion-free coherent sheaves on  varieties over algebraically closed fields $k = \bar{k}$.
 Let $F$ be a torsion-free coherent sheaf on a nice variety $X$ of dimension $m$. Given an ample class $H$ on $X$, we define the \defi{slope of $F$ with respect to $H$} 
 \[ \mu_H(F):=\frac{c_1(F)\cdot H^{m-1}}{\mathrm{rk}(F)}.\]
In what follows, we leave the reliance on $H$ implicit, and just refer to the slope as $\mu(F)$. 

The sheaf $F$ is called \defi{$\mu$-unstable (with respect to $H$)} if there exists a coherent sheaf $E\subseteq F$ such that 
\[ \mu(E) >\mu(F). \]
Otherwise we say that $F$ is \defi{$\mu$-semistable (with respect to $H$)}.

The $\mu$-semistable sheaves are the building blocks of torsion-free coherent sheaves on $X$. More precisely, for $F$ any torsion-free coherent sheaf, by \cite[Theorem 1.6.7]{HL10} there exists a unique \defi{Harder-Narasimhan filtration} of $F$,
\[ 0=F_0\subset F_1\subset \ldots\subset F_n=F,\]
which is characterized by the following properties
\begin{enumerate}
\item Each quotient $G_i\colonequals F_i/F_{i-1}$ is a torsion free $\mu$-semistable sheaf.
\item  If $1\leq i < j \leq n$, then $\mu(G_i)>\mu(G_j)$.
\end{enumerate}
In particular, we will use the fact that given an unstable
torsion free coherent sheaf $F$, there is a unique nonzero subsheaf $E\subset F$ such that $F/E$ is semistable and torsion free, and $\mu(E)$ is maximal among subsheaves of $F$. We call this $E$ the \defi{maximal destabilizing subsheaf} of $F$.

\begin{rem}\label{assumptions_destab}
If $X$ is a curve (i.e., $m=1$), then a vector bundle $F$ is unstable if and only if it is destabilized by a subbundle $E \subseteq F$, since the saturation of a destabilizing subsheaf will yield a destabilizing subbundle.  However, saturation does not in general yield a subbundle (though, by the fact that the quotient is torsion free, the maximal destabilizing subsheaf of a vector bundle is always saturated \cite[Definition 1.1.5]{HL10}).  If $X$ is a surface (i.e., $m=2$) and $F$ is a vector bundle, then the maximal destabilizing subsheaf is itself also a vector bundle (though not in general a subbundle), as we now show.  By \cite[Corollary 1.4]{harts80}, any sheaf $E$ on $X$ that is \defi{reflexive} (i.e. the natural map $E \to E^{\vee \vee}$ is an isomorphism) is locally free.  Furthermore, a saturated subsheaf $E \subseteq F$ of a locally free sheaf is reflexive (as $E^{\vee \vee}/E \hookrightarrow F/E$ would otherwise be a torsion subsheaf, see also \cite[Corollary 1.5]{harts80}).  The maximal destabilizing subsheaf is saturated, and hence reflexive, and hence locally free.
\end{rem}

 We also define the \defi{discriminant} of a coherent sheaf $F$ on a smooth complex projective surface in terms of Chern characters as the quantity
 \[ \Delta(F):=2\ch_0(F)\ch_2(F)-\ch_1(F)^2.\]
 
 The following fundamental theorem of Bogomolov \cite[Theorem 3.4.1]{HL10} implies that the property of $\mu$-stability of sheaves on surfaces is numerical.
 
\begin{thm}[Bogomolov inequality]\label{bogomolov}
Let $S$ be a smooth complex projective surface.
If $F$ is a $\mu$-semistable torsion-free coherent sheaf on $S$ with respect to some ample class, then $\Delta(F) \leq 0$.
\end{thm}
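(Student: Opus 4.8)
The plan is to prove the contrapositive in its strongest form: assuming $\Delta(F) > 0$, I will produce a subsheaf destabilizing $F$ with respect to any ample class. First I would reduce to the case that $F$ is a vector bundle. Since $F$ is torsion-free on a smooth surface, its double dual $F^{\vee\vee}$ is reflexive, hence locally free by \cite[Corollary 1.4]{harts80}, and the inclusion $F \hookrightarrow F^{\vee\vee}$ has cokernel supported on finitely many points, of some length $\ell \geq 0$. This leaves $\rk$ and $c_1$ unchanged and only decreases $\ch_2$, so $\Delta(F) = \Delta(F^{\vee\vee}) - 2\rk(F)\ell \leq \Delta(F^{\vee\vee})$; moreover $F^{\vee\vee}$ is again $\mu_H$-semistable, since any destabilizing subsheaf of $F^{\vee\vee}$ would meet $F$ in a subsheaf of the same rank and slope. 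It therefore suffices to bound $\Delta$ for the bundle $F^{\vee\vee}$, and I may assume $F$ is locally free of rank $r$.

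The heart of the argument is to amplify a hypothetical positive $\Delta(F)$ using tensor powers of the endomorphism bundle. Set $\mathcal{E}nd(F) = F \otimes F^\vee$; a direct Chern character computation gives $\rk \mathcal{E}nd(F) = r^2$, $c_1(\mathcal{E}nd(F)) = 0$, and $\ch_2(\mathcal{E}nd(F)) = 2r\,\ch_2(F) - c_1(F)^2 = \Delta(F)$. The key structural input is that in characteristic $0$ the dual and tensor product of $\mu_H$-semistable sheaves are again $\mu_H$-semistable; granting this, the powers $G_n := \mathcal{E}nd(F)^{\otimes n}$ are all $\mu_H$-semistable of slope $0$. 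Because $c_1 = 0$, the degree-two part of $\ch(\mathcal{E}nd(F))^{n}$ is $n\, r^{2(n-1)}\ch_2(\mathcal{E}nd(F))$, so $\rk(G_n) = r^{2n}$, $c_1(G_n) = 0$, and $\ch_2(G_n) = n\, r^{2(n-1)}\Delta(F)$.

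Next I would twist into negative slope and apply Riemann--Roch. Fix an effective divisor $D$ with $D \cdot H > 0$ and set $M_n := G_n \otimes \O_S(-D)$, which is $\mu_H$-semistable of slope $-D\cdot H < 0$. A nonzero global section of $M_n$ would produce a saturated rank-one subsheaf $\O_S(C')$ with $C'$ effective, hence of slope $\geq 0$, destabilizing $M_n$; so $h^0(M_n) = 0$. Hirzebruch--Riemann--Roch gives $\chi(M_n) = \ch_2(M_n) - \tfrac12 c_1(M_n)\cdot K_S + \rk(M_n)\chi(\O_S) = r^{2(n-1)}\big(n\Delta(F) + r^2 c\big)$ for a constant $c$ depending only on $D$ and $S$, so if $\Delta(F) > 0$ this tends to $+\infty$. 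Since $h^0(M_n) = 0$, Serre duality gives $\chi(M_n) \leq h^2(M_n) = h^0(K_S \otimes M_n^\vee)$, where $K_S \otimes M_n^\vee$ is $\mu_H$-semistable of slope $(K_S + D)\cdot H$, a number independent of $n$. A standard estimate, obtained by restricting to hyperplane sections in $|H|$ and applying Riemann--Roch and Clifford's theorem on curves, bounds $h^0$ of a $\mu_H$-semistable sheaf of bounded slope linearly in its rank, so $h^2(M_n) \leq C\, r^{2n}$ for a constant $C$ independent of $n$. Comparing growth rates, $\chi(M_n) \sim n\, r^{2(n-1)}\Delta(F)$ eventually exceeds $C\, r^{2n}$, contradicting $\chi(M_n) \leq h^2(M_n)$. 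Hence $\Delta(F) \leq 0$.

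The main obstacle is the step I used as a black box: that tensor powers and duals of $\mu$-semistable sheaves on the surface remain $\mu$-semistable. This is exactly where the characteristic-zero hypothesis is essential and where genuine geometry, rather than Chern-class bookkeeping, enters; the standard route is via the Mehta--Ramanathan restriction theorem, reducing to the classical fact that tensor products of semistable bundles on a curve are semistable. The linear-in-rank bound on global sections is the other nontrivial input, though elementary by comparison. As an alternative isolating the difficulty differently, one could first use the Hodge index theorem together with the additivity identity $\tfrac{\bar\Delta(F)}{r} = \tfrac{\bar\Delta(E)}{\rk E} + \tfrac{\bar\Delta(Q)}{\rk Q} - \tfrac{\xi^2}{r\,\rk E\,\rk Q}$ for an extension $0 \to E \to F \to Q \to 0$ (where $\xi = \rk(Q)c_1(E) - \rk(E)c_1(Q)$ and $\bar\Delta = -\Delta$) along a Jordan--Hölder filtration to reduce to the case of $\mu_H$-stable $F$; but the stable case still requires one of the analytic inputs above.
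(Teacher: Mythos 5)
The paper does not prove this statement itself---it imports it directly from Huybrechts--Lehn \cite[Theorem 3.4.1]{HL10}---and your argument is essentially the standard proof given there: reduce to a locally free $F$ via the double dual, pass to $\mathcal{E}nd(F)^{\otimes n}$ (semistable of slope $0$ with $\ch_2$ growing linearly in $n$ times $\Delta(F)$), twist to negative slope to kill $h^0$, and play the Riemann--Roch growth of $\chi$ against a linear-in-rank bound on $h^2$ via Serre duality. Your computations check out and you correctly isolate the two genuine inputs (semistability of tensor products in characteristic $0$, and the uniform section bound for semistable sheaves of fixed slope), so this is a correct proof along the same lines as the paper's cited source.
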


\begin{rem}
Once one knows that stability is a numerical property, the fact that $\Delta(F)$ is the precise combination of Chern classes capturing this follows from the fact that it is the minimal polynomial in the Chern classes that is invariant under twisting by line bundles.
\end{rem}

We now apply Bogomolov's Inequality to prove a geometric result that will ultimately produce the bounds we desire.  The following result is originally due to Reider \cite{reider2}, see Prop. 2.10,
Remark 2.11 and Cor. 1.40, but we include a proof for completeness.

\begin{prop}\label{lin_series_prop}
Let $S$ be a smooth projective surface and $C \subset S$ a smooth curve such that $\O_S(C)$ is ample.  If  $\Gamma$ is a divisor on $C$ that moves in a basepoint-free pencil, satisfying
\[ \deg \Gamma < C^2/4,\]
then there exists a divisor $D$ on $S$ satisfying the following four conditions 
\begin{enumerate}
\item $h^0(S, D) \geq 2$,
\item $C \cdot D < C^2/2$,
\item $\deg \Gamma \geq D \cdot (C - D)$.
\item If $E$ is any divisor on $S$ such that
\begin{equation}\label{eConditions}h^0(\O_C(E|_C - \Gamma)) = 0 \qquad \text{and} \qquad E \cdot C < C^2, \end{equation}
then $h^0(\O_S(E - D)) = 0$.  In particular, $h^0(\O_C(D|_C - \Gamma)) > 0$.
\end{enumerate}

\end{prop}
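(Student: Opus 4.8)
The plan is to manufacture a rank-two vector bundle on $S$ out of the pencil, show that it violates the Bogomolov inequality, and then read off $D$ from its maximal destabilizing subsheaf. Concretely, I would first set up the \emph{Lazarsfeld--Mukai bundle}: choose sections $\sigma_1,\sigma_2$ spanning the basepoint-free pencil, put $V=\langle\sigma_1,\sigma_2\rangle$, and write $\iota\colon C\hookrightarrow S$. Since the pencil is basepoint-free, the evaluation $V\otimes\O_S\to \iota_*\O_C(\Gamma)$ is surjective; let $F$ be its kernel, which is a rank-two vector bundle because $\iota_*\O_C(\Gamma)$ has homological dimension one. A Grothendieck--Riemann--Roch computation, combined with adjunction for the genus of $C$, gives $\ch(F)=(2,-C,\tfrac12 C^2-\deg\Gamma)$, hence $\Delta(F)=C^2-4\deg\Gamma$, which is strictly positive by the hypothesis $\deg\Gamma<C^2/4$. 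By the contrapositive of the Bogomolov inequality (Theorem~\ref{bogomolov}), $F$ is $\mu$-unstable with respect to the polarization $H=C$.

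Next I would extract $D$. Let $M$ be the maximal destabilizing subsheaf; by Remark~\ref{assumptions_destab} it is a saturated sub-line-bundle, say $M=\O_S(-D)$. Comparing slopes gives $\mu(M)=-D\cdot C>\mu(F)=-C^2/2$, which is exactly condition (2). The inclusion $M\hookrightarrow V\otimes\O_S$ is recorded by a pair of sections $s_1,s_2\in H^0(S,\O_S(D))$, and I would prove they are linearly independent—yielding condition (1)—as follows: the composite $M\to\iota_*\O_C(\Gamma)$ vanishes since $M\subseteq F$, so $(s_1|_C)\sigma_1+(s_2|_C)\sigma_2=0$ in $H^0(C,\O_C(\Gamma+D|_C))$; if $s_1,s_2$ were proportional this forces $s_1|_C=0$ (using that $C$ is integral), so $s_1$ is divisible by the equation of $C$ and $D-C$ is effective, contradicting (2). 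For condition (3) I would write the saturated quotient $F/M\cong\O_S(D-C)\otimes\I_Z$ with $Z$ zero-dimensional of length $\ell\geq 0$, match $c_2(F)=\deg\Gamma$ against the $c_2$ of this extension, and obtain $\deg\Gamma=D\cdot(C-D)+\ell\geq D\cdot(C-D)$.

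The crux is condition (4), and the same vanishing syzygy is the engine. Comparing the divisors in $(s_1|_C)\sigma_1+(s_2|_C)\sigma_2=0$ and using that the basepoint-free pencil has $\mathrm{div}(\sigma_1)\cap\mathrm{div}(\sigma_2)=\emptyset$ shows $\mathrm{div}(s_1|_C)\geq\mathrm{div}(\sigma_2)\in|\Gamma|$, so that $s_1|_C$ descends to a nonzero section of $\O_C(D|_C-\Gamma)$. To treat a general $E$, I would argue by contradiction: if $h^0(\O_S(E-D))\neq 0$, a nonzero section $t$ gives $\O_S(-E)\hookrightarrow M\hookrightarrow F$ realized by $ts_1,ts_2\in H^0(\O_S(E))$, and rerunning the divisor argument produces a nonzero section of $\O_C(E|_C-\Gamma)$, contradicting the hypothesis $h^0(\O_C(E|_C-\Gamma))=0$.

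The one genuinely delicate point—the step I expect to be the main obstacle—is ruling out the degenerate possibility that $(ts_1)|_C=(ts_2)|_C=0$, for then no honest section on $C$ is produced. Here I would observe that both $ts_i$ are then divisible by the equation of $C$; since $s_1,s_2$ do not both vanish on $C$ (that was excluded in proving (1)), integrality of $C$ forces $t|_C=0$, whence $E-D-C$ is effective. Combined with $D$ being effective (from (1), so $D\cdot C\geq 0$), this gives $E\cdot C\geq (D+C)\cdot C\geq C^2$, contradicting the standing hypothesis $E\cdot C<C^2$; this is precisely where the cutoff $E\cdot C<C^2$ in \eqref{eConditions} is used. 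Finally, applying the now-established condition (4) with $E=D$ forces $h^0(\O_C(D|_C-\Gamma))>0$, since otherwise we would conclude $h^0(\O_S)=0$, giving the "in particular" clause.
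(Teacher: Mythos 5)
Your proposal is correct, and its skeleton---the Lazarsfeld--Mukai kernel bundle $F$ with $\ch(F)=(2,-C,C^2/2-\deg\Gamma)$, the Bogomolov inequality forcing $\mu$-instability with respect to $C$, and the extraction of $D$ from the maximal destabilizing line bundle $\O_S(-D)$---is exactly the paper's. Properties (2) and (3) are obtained the same way (your $c_2$-count on $0\to\O_S(-D)\to F\to\O_S(D-C)\otimes\I_Z\to0$ is the same computation as the paper's $\Delta(F/\O_S(-D))\leq 0$). Where you genuinely diverge is in (1) and especially (4): you work at the level of the explicit syzygy $(s_1|_C)\sigma_1+(s_2|_C)\sigma_2=0$ and compare divisors on $C$ using basepoint-freeness of the pencil, whereas the paper proves (4) by a diagram chase --- tensoring the two defining exact sequences by $\O_S(E)$, using $E\cdot C<C^2$ and ampleness of $C$ to get $h^0(\O_S(E-C))=0$ so that restriction is injective on $H^0$, and concluding $h^0(F\otimes\O_S(E))=0$, hence $h^0(\O_S(E-D))=0$. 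Your route is more elementary and makes visible exactly where the section of $\O_C(E|_C-\Gamma)$ comes from, at the cost of the degenerate case $(ts_1)|_C=(ts_2)|_C=0$ that you flag and dispose of correctly; note that both arguments invoke the hypothesis $E\cdot C<C^2$ at precisely the analogous point. One small imprecision: in that degenerate case you need that $s_1,s_2$ do not both vanish on $C$, which does not literally follow from their non-proportionality as established in your proof of (1); it does, however, follow by the same mechanism, since both vanishing on $C$ would make $D-C$ effective and hence $D\cdot C\geq C^2$, contradicting (2).
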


\begin{proof}
As $\Gamma$ moves in a basepoint-free pencil, there is a choice of two sections generating the line bundle and hence giving a surjection 
$\O_C^{\oplus 2} \to \O_C(\Gamma)$.  This map fits into an exact sequence
\begin{equation}\label{exactC} 0 \to \O_C(-\Gamma) \to O_C^{\oplus 2} \to \O_C(\Gamma) \to 0. \end{equation}

Let $i \colon C \hookrightarrow S$ be the inclusion map.  Then $i_*\O_C(\Gamma)$ is a torsion sheaf on $S$.  We define the coherent sheaf $F$ on $S$ via the exact sequence of coherent sheaves
\begin{equation}\label{exactS} 0 \to F \to \O_S^{\oplus 2} \to i_*\O_C(\Gamma) \to 0,\end{equation}
where the right map factors through the surjection 
$\O_S^{\oplus 2 } \to \O_C^{\oplus 2}$.

As the only associated point of $i_*\O_C(\Gamma)$ is the generic point of the divisor $C \subseteq S$, the sheaf $F$ is reflexive \cite[Corollary 1.5]{harts80}, and hence locally free \cite[Corollary 1.4]{harts80}.

Set $e \colonequals \deg \Gamma$.  Using Grothendieck-Riemann-Roch to calculate the Chern classes of the pushforward $i_*\O_C(\Gamma)$, we may compute the discrete invariants of $F$ from the exact sequence \eqref{exactS}:
\begin{align*}
\ch_0(F) &= \rk(F) = \rk(\O_S^{\oplus 2}) = 2 \\
\ch_1(F) &= c_1(F) = c_1(\O_S^{\oplus 2}) - c_1(i_*\O_C(\Gamma)) = -[C] \\
\ch_2(F) &= \ch_2(\O_S^{\oplus 2}) - \ch_2(i_*\O_C(\Gamma)) = C^2/2 - c_2(i_*\O_C(\Gamma)) = C^2/2 - e.
\end{align*}
The vector bundle $F$ therefore has Chern character $\ch(F)=(2, -[C], C^2/2-e)$ and hence has discriminant $\Delta(F)=C^2-4e$. Therefore, by assumption, $\Delta(F)>0$, so $F$ is $\mu$-unstable with respect to any ample class on $S$; we will use $C$ as the ample class on $S$. 

Let $L$ to be the maximal destabilizing subsheaf of $F$, which by Remark \ref{assumptions_destab} is locally free and hence a line bundle.
 Write $L\cong \O(-D)$, where $D$ is some divisor on $S$. We show this $D$ satisfies properties (1)--(4) of the proposition. 
\begin{itemize}
\item \textbf{Property (2): $ C \cdot D < C^2 /2$:}

By the definition of the maximal desabilizing subsheaf, we have
 \[ \mu_C(L) = (-D) \cdot C > (-C) \cdot C/2 = \mu_C(F),\]
 which is equivalent to property (2).
 \item \textbf{ Property (3): $e\geq D \cdot (C-D)$:}
 
In the exact sequence
\begin{equation}\label{2} 0\ra\O(-D)\ra F\ra Q\ra 0,\end{equation}
the quotient $Q$ is $\mu$-semistable with respect to $C$. 

Therefore $\Delta(Q)\leq 0$, which is equivalent to
\[e\geq D \cdot (C-D).\]
\item \textbf{Property (1): $h^0(S,D)\geq 2$:} 

Dualizing the inclusion $\O(-D) \to \O_S^{\oplus 2}$, it suffices to show that the map $H^0(S, \O_S^{\oplus 2}) \to H^0(S, \O(D))$ is injective.  If it is not injective, then we may assume that one map $H^0(S, \O_S) \to H^0(S, \O(D))$ is zero, and hence (since $\O_S$ and $\O(-D)$ are reflexive), the original inclusion must factor $\O(-D) \to \O_S \to \O_S^{\oplus 2}$.  In particular, $D$ is effective and the quotient of the inclusion $\O(-D) \to \O_S^{\oplus 2}$ is isomorphic to $\O_S \oplus \O_D$.

Because $C\cdot D<C^2/2$ and $C$ is integral and ample, we have that $D\cap C$ must be zero-dimensional.  Hence $\Hom(\O_D, i_*\O_C(\Gamma)) = 0$.  Furthermore, $\O_S$ admits no surjective maps onto $i_*\O_C(\Gamma)$.  Therefore $\O_D \oplus \O_S$ does not surject onto $i_*\O_C(\Gamma)$.  This is a contradiction, as the inclusion $\O(-D) \hookrightarrow \O_S^{\oplus 2}$ factors through $F \hookrightarrow \O_S^{\oplus 2}$, and $\O_S^{\oplus 2}/F = i_*\O_C(\Gamma)$.
Therefore, we must have $h^0(S,\O(D))\geq 2$. 

\item \textbf{Property (4): A divisor $E$ satisfying equation (\ref{eConditions}) also satisfies $h^0(\O_S(E-D))=0$:}

Let $E$ be a divisor on $S$ such that
\[h^0(\O_C(E|_C - \Gamma)) = 0 \qquad \text{and} \qquad E \cdot C < C^2. \]
 By the projection formula we have 
\[\O_S(E) \otimes i_* \O_C( \pm \Gamma) \simeq i_* \O_C(E|_C \pm \Gamma).\]  
We therefore have the diagram with exact rows
\begin{center}
\begin{tikzcd}
0  \arrow{r} & F \otimes \O_S(E)  \arrow{r} & \O_S(E)^{\oplus 2} \arrow{d}{\res} \arrow{r} &  i_*\O_C(\Gamma) \otimes \O_S(E) \arrow[equal]{d} \arrow{r} & 0 \\
0 \arrow{r} & i_*\O_C(E|_C - \Gamma) \arrow{r} & i_*\O_C(E|_C)^{\oplus 2} \arrow{r} & i_*\O_C(E|_C + \Gamma) \arrow{r} & 0
\end{tikzcd}
\end{center}
By assumption we have $E \cdot C < C^2$; therefore as $C$ is ample, $h^0(\O_S(E - C)) = 0$, and so the vertical map $\res$ is injective on global sections.  Combined with the assumption that $h^0(C, E|_C - \Gamma) =0$, we have that $h^0(S, F \otimes \O_S(E)) = 0$.  Tensoring \eqref{2} with $\O_S(E)$ and taking global sections, this implies $h^0(S, E - D) = 0$ as desired. 

Since $C \cdot D < C^2 /2 < C^2$, if $h^0(C, D|_C - \Gamma) =0$, then we could take $E = D$ and obtain the contradiction $h^0(S, \O_S) = 0$. Hence we must have that $\O_C(D|_C - \Gamma)$ \textit{is} effective. \qedhere
\end{itemize}
\end{proof}

\begin{rem}
The use of Bogomolov's inequality as a tool for proving the existence of divisors satisfying nice positivity properties originates in Reider's proof \cite{reider} of Reider's theorem, and has been developed by Lazarsfeld \cite{lazars} and others.  In particular, see \cite{ullery, ras} for recent applications in the Picard rank $1$ case.
\end{rem}

\subsection{Examples: curves on \texorpdfstring{$E \times \pp^1$}{E x P1}}
As a first example, let us now see how these techniques apply when $C$ is a smooth curve on $S = E \times \pp^1$.
We denote the projection maps
\begin{center}
\begin{tikzcd}
& E \times \pp^1 \arrow[swap]{dl}{\pi_1} \arrow{dr}{\pi_2} & \\
E & & \pp^1
\end{tikzcd}
\end{center}
to $E$ and $\pp^1$, respectively.  Then 
\[\Pic S = \pi_1^* \Pic E \oplus \pi_2^* \Pic \pp^1. \]
As is standard, if $\L_1$ is a line bundle on $E$ and $\L_2$ is a line bundle on $\pp^1$, we write $\L_1 \boxtimes \L_2$ for $\pi_1^* \L_1 \otimes \pi_2^*\L_2$.
Furthermore, the N\'eron-Severi group is $\NS(S) = \zz \oplus \zz$, spanned by the classes $F_1$ and $F_2$ of fibers of the first and second projections, respectively.  These satisfy the intersection relations
\[F_1^2 = 0, \qquad F_2^2 = 0, \qquad F_1 \cdot F_2 = 1.\]
We will denote the numerical class $xF_1 + yF_2$ of a divisor by $(x,y)$.  The effective cone of $S$ is then the set of all classes with $x,y \geq 0$, and the ample cone is the set of all classes with $x,y > 0$.  

The following geometric result is the main ingredient in the proof of Theorem \ref{every_value}.

\begin{thm}\label{every_value_geometric}
Let $C$ be a smooth curve on $S = E \times \pp^1$ in numerical class $(\gamma, \alpha)$ for 
\[2 \leq \gamma/2 \leq \alpha \leq \gamma. \]
Then $C$ satisfies the following properties.
\begin{enumerate}
\item[(a)] $\gon(C) = \gamma$.
\item[(b)] $W_{\alpha} C$ contains an elliptic curve isogenous to $E$.
\item[(c)] If $e < \alpha$, then $W_eC$ does not contain any positive-dimensional abelian varieties.
\end{enumerate}
\end{thm}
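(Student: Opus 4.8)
The whole argument is organized around the two projections $\pi_1\colon C\to E$ and $\pi_2\colon C\to\pp^1$. Using $F_1^2=F_2^2=0$ and $F_1\cdot F_2=1$ one computes $C\cdot F_1=\alpha$, $C\cdot F_2=\gamma$, and $C^2=2\gamma\alpha$, so $\deg\pi_1=\alpha$, $\deg\pi_2=\gamma$, and (by adjunction) $g(C)=\gamma(\alpha-1)+1$. For part (a) the degree-$\gamma$ map $\pi_2$ gives $\gon(C)\le\gamma$; for the reverse inequality I would assume a basepoint-free pencil $\Gamma$ with $\deg\Gamma<\gamma\le C^2/4$ (the last bound using $\alpha\ge 2$), apply Proposition \ref{lin_series_prop} to produce $D\equiv(d_1,d_2)$, and check that $D\cdot(C-D)=\gamma d_2+\alpha d_1-2d_1d_2\ge\gamma$ over the finitely many admissible $(d_1,d_2)$ (property (2) forces $0\le d_1<\gamma$, $0\le d_2<\alpha$, and $h^0(S,D)\ge 2$ rules out $(0,0)$ and $(1,0)$), contradicting property (3). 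Part (b) is the dual construction: pullback along $\pi_1$ gives a morphism $\Pic^1E\to W_\alpha C$, $q\mapsto[\pi_1^{-1}(q)]$, whose image is a translate of the abelian subvariety $E'\colonequals\pi_1^\ast\Jac E\subseteq\Jac C$, an elliptic curve isogenous to $E$.

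For part (c) I would argue by contradiction, choosing $e<\alpha$ minimal so that $W_eC$ contains a positive-dimensional abelian variety $A$. Minimality forces $A\not\subseteq x+W_{e-1}C$, so Lemma \ref{produce_bpf_pencil} produces, for each $p\in A_2\subseteq W_{2e}C$, a basepoint-free pencil $\Gamma_p=|L_p|$ of degree $2e$ with $h^0(L_p)\ge 2$. As $\gamma\ge 4$ we have $2e<2\alpha\le C^2/4$, so Proposition \ref{lin_series_prop} again yields $D\equiv(d_1,d_2)$ satisfying (1)--(4); the goal is now to upgrade the bound from part (a) and show $D\cdot(C-D)\ge 2\alpha$, which contradicts property (3) together with $e<\alpha$.

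The case analysis splits into three regimes. If $d_2=0$ then $d_1\ge 2$ and $D\cdot(C-D)=\alpha d_1\ge 2\alpha$ immediately. If $d_1=0$, the divisor $D=\pi_2^\ast\O_{\pp^1}(d_2)$ has no moduli, so $D|_C$ is a single fixed class; property (4) gives $L_p=[D|_C]-R_p$ with $R_p$ effective of degree $C\cdot D-2e$, and since $D^2=0$ this degree vanishes, pinning every $L_p$ to one class and contradicting $\dim A_2=\dim A\ge 1$. This interplay of basepoint-freeness, property (4), and the positive-dimensionality of $A$ is precisely where the explicit geometry of $E\times\pp^1$ stands in for the $h^1(\O_S)=0$ induction of the general theorem. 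In the interior regime $d_1,d_2\ge 1$, a direct estimate gives $\gamma d_2+\alpha d_1-2d_1d_2\ge 2\alpha$ whenever $\gamma\ge 4$, with the sole exception $(d_1,d_2)=(1,1)$, which survives only for $\alpha\in\{\gamma-1,\gamma\}$; the value $\alpha=\gamma-1$ is then eliminated by parity, since $D\cdot(C-D)=2\gamma-3$ is odd and forces $2e\ge 2\gamma-2$, i.e.\ $e\ge\alpha$.

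The one genuine obstacle is the boundary case $\alpha=\gamma$, $(d_1,d_2)=(1,1)$, $e=\gamma-1$, where property (3) only gives $e\ge\alpha-1$. Here I would extract more from property (4): writing $D=\O_E(q)\boxtimes\O_{\pp^1}(1)$, the destabilizing class $[D_p]\in\Pic^1E$ varies with $p$ and defines a morphism $A_2\to E$. If this map is constant, then $L_p$ differs from a fixed class by an effective degree-$2$ divisor $R_p$, so a translate of $A$ sits inside $W_2C$; as $2<e$ this contradicts the minimality of $e$ (and rules out the vertical classes a bielliptic structure would produce). Hence the map $A_2\to E$ is an isogeny, so $A$ is an elliptic curve isogenous to $E$. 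It then remains to rule out such a curve in $W_{\gamma-1}C$: pushing the family forward along $\pi_1$, the condition $h^0(C,L_0\otimes\pi_1^\ast N)>0$ for all $N\in\Pic^0E$ is equivalent to $\mu_{\max}\big((\pi_1)_\ast L_0\big)>0$, and since pullback along the finite map $\pi_1$ preserves $\mu$-semistability in characteristic $0$, the surjection $\pi_1^\ast(\,\cdot\,)\twoheadrightarrow L_0$ forced by basepoint-freeness should bound $\deg L_0$ below by $\alpha$. Making this final slope bound sharp---ruling out a low-slope positive piece of $(\pi_1)_\ast L_0$---is the delicate heart of the argument, and is the step I expect to require the most care.
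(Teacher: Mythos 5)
Parts (a) and (b) are correct and essentially identical to the paper's argument (for (a) the paper derives $d_2=0$ and $d_1\in\{0,1\}$ from properties (2) and (3) and then invokes $h^0(S,D)\ge 2$; your check of $D\cdot(C-D)\ge\gamma$ on the admissible classes minus $\{(0,0),(1,0)\}$ is the contrapositive and does go through). In part (c) your reduction to $(d_1,d_2)\in\{(0,1),(1,1)\}$ is correct, your disposal of $d_1=0$ via $0\le\deg(D|_C-\Gamma_p)=C\cdot D-2e\le D^2=0$ is clean, and the parity trick eliminating $\alpha=\gamma-1$ is a nice observation the paper does not need.

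The genuine gap is exactly where you flag it: the case $\alpha=\gamma$, $(d_1,d_2)=(1,1)$, $e=\gamma-1$. Your dichotomy on whether $p\mapsto q_p$ is constant already has soft spots (you need the destabilizing class to vary algebraically with $p$ to get a morphism $A_2\to E$, and a surjection $A_2\to E$ is an isogeny only if $\dim A_2=1$), but the decisive problem is that the non-constant branch ends in an unproven slope estimate for $(\pi_1)_*L_0$: ruling out an elliptic curve isogenous to $E$ inside $W_{\gamma-1}C$ is precisely the content of the statement in this regime, since $W_\gamma C$ does contain one by part (b), so at this point nothing has been proved. The paper closes this case (and the $(0,1)$ case, uniformly, with no dichotomy on $D_p$) by elementary means: enlarge $D_p$ to a divisor $D$ of class $F_1+F_2$, use K\"unneth to get $h^0(S,D-C)=h^1(S,D-C)=0$ and hence $h^0(C,D|_C)=h^0(S,D)=2$, and observe that every member of $|D|$ is the reducible union of $\pi_1^{-1}(q)$ and a fiber of $\pi_2$, so $\big|D|_C\big|$ has fixed part $\pi_1^{-1}(q)\cap C$ and moving part the fibers of $\pi_2|_C$. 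Since $\Gamma_p$ is basepoint free with $D|_C-\Gamma_p$ effective and $\deg\Gamma_p\ge\gon(C)=\gamma$ by part (a), $\Gamma_p$ must equal $\O_{\pp^1}(1)|_C$, which is independent of $p$, so $A_2$ is a point. You should replace your final step with an argument of this kind; the pushforward-stability route would need the missing bound on $\mu_{\max}((\pi_1)_*L_0)$ to be established before it counts as a proof.
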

\begin{rem}
Note that Bertini's theorem guarantees that there exist smooth curves in numerical class $(\gamma, \alpha)$ once $\gamma \geq 2$ and $\alpha \geq 1$, as the linear equivalence class is necessarily basepoint free.
\end{rem}

\begin{proof}  We have $\O_S(C) \simeq \O_E(\gamma e) \btimes \O_{\pp^1}(\alpha)$ for some point $e \in E$; then $C^2 = 2\alpha \gamma$.  The two projection maps exhibit $C$ as a $\gamma$-sheeted cover of $\pp^1$, and an $\alpha$-sheeted cover of $E$.  Therefore $\gon(C) \leq \gamma$.  Furthermore, we have a nonconstant map $E \to W_{\alpha}(C)$ sending $x \in E$ to $\O(\pi_1^{-1}(x))$, proving part (b).

\begin{enumerate}

\item[(a)] Suppose to the contrary that $\Gamma$ is a divisor on $C$ of degree at most $\gamma -1$ that moves in a basepoint free pencil.  Then
\[\deg \Gamma \leq \gamma -1 <  \alpha \gamma/2 = C^2/4,\]
as $\alpha \geq 2$.  So by Proposition \ref{lin_series_prop}, there exists an effective divisor $D$ on $S$ with at least $2$ sections, satisfying \\
\indent\ref{lin_series_prop}(2): $C \cdot D < C^2/2$; \\
\indent\ref{lin_series_prop}(3): $D \cdot (C -D) \leq \deg \Gamma$.\\
The divisor $D$ is in numerical class $xF_1 + y F_2$ for some $x \geq 0$ and $y \geq 0$, and so these numerical conditions translate into\\
$\alpha x + \gamma y < \alpha \gamma$ \\
$\alpha x +  \gamma y - 2xy \leq \Gamma < \gamma$. \\
Upon rearrangement we have:\\
(2'): $\alpha(\gamma/2 - x) + \gamma(\alpha/2-y) > 0$, \\ 
(3'): $(\gamma/2 - x)(\alpha/2 - y) > (\gamma/2)(\alpha/2 -1) \geq 0$,\\
as $\alpha\geq 2$.  Therefore both $\gamma/2 - x$ and $\alpha/2 - y$ have to be positive.  Furthermore, we have
\[(\gamma/2)(\alpha/2 - y)\geq (\gamma/2 - x)(\alpha/2 - y)  > (\gamma/2)(\alpha/2-1),\]
so $y = 0$.
Plugging $y=0$ back into inequality (3'), we see
\[(\alpha/2)(\gamma/2-x) > (\gamma/2)(\alpha/2-1)\]
 and so $x < \gamma/\alpha \leq 2$.
So $x$ is $0$ or $1$.  But every divisor of numerical class $0$ or $F_1$ has at most $1$ section, which is a contradiction.

\item[(c)] 
Suppose to the contrary that there exists a positive dimensional abelian variety $A \hookrightarrow W_eC$ for $e \leq \alpha -1$; and further that $e$ is minimal for this property.  Then by Lemma \ref{produce_bpf_pencil}, the points $p \in A_2$ parameterize basepoint free linear systems $\Gamma_p$.  On the other hand, 
\[\deg \Gamma_p = 2e \leq 2\alpha - 2 < \alpha\gamma/2 = C^2/4,\] 
since $\gamma \geq 4$.  Proposition \ref{lin_series_prop} produces a divisor $D_p$ on $E \times \pp^1$, say in numerical class $xF_1 + y F_2$, satisfying the following properties: \\
\indent\ref{lin_series_prop}(1): $h^0(D_p) \geq 2$; \\
\indent\ref{lin_series_prop}(2): $\alpha x + \gamma y < \alpha \gamma$; \\
\indent\ref{lin_series_prop}(3): $\alpha x  + \gamma y - 2xy \leq \deg \Gamma_p \leq 2\alpha -2 \leq 2\gamma -2$.\\
\indent\ref{lin_series_prop}(4): $\O_C(D_p|_C - \Gamma_p)$ is effective.\\
We may write the two inequalities as
\[\alpha(\gamma/2-x) + \gamma(\alpha/2-y) > 0, \qquad (\gamma/2-x)(\alpha/2-y) > \alpha(\gamma/4-1) \geq 0, \]
with the rightmost inequality coming from our assumption that $\gamma \geq 4$.  Therefore both $\gamma/2-x$ and $\alpha/2-y$ must be positive.  We have
\[(\gamma/2-x)(\alpha/2)\geq (\gamma/2-x)(\alpha/2-y)  > \alpha(\gamma/4 -1),\]  
and so $x < 2$.
Similarly for $y$ we obtain $y < 2\alpha/\gamma \leq 2$.  Combining this with the requirement that $D_p$ move in a pencil on $E \times \pp^1$, we see that it must be in numerical class $F_2$ or $F_1 + F_2$.

Let $D$ be a divisor on $S$ in numerical class $F_1 + F_2$ that contains $D_p$.  Let $q \in E$ be such that $\O_S(D) \simeq \O_E(q) \btimes \O_{\pp^1}(1)$.
Then
\[ D|_C - \Gamma_p \geq D_p|_C - \Gamma_p \geq 0,\]
by \ref{lin_series_prop}(4).
By the K\"unneth formula
\[H^0(S, D) = H^0(E \times \pp^1, \O_E(q)\btimes \O_{\pp^1}(1)) \simeq H^0(E , \O_E(q)) \otimes H^0(\pp^1, \O_{\pp^1}(1))\]
and so every divisor in $|D|$ is reducible, the union of the fiber $\pi_1^{-1}(q)$ and some fiber of $\pi_2$.  
As 
\[\O_S(D-C) \simeq \O_E(q-\gamma e)\btimes \O_{\pp^1}(1-\alpha),\] 
and both $H^0(E, \O_E(q-\gamma e))$ and $H^0(\pp^1, \O_{\pp^1}(1-\alpha))$ are $0$, the K\"unneth formula implies that $h^0(S, D-C) = h^1(S, D-C) = 0$. 
Therefore the map $H^0(E \times \pp^1, D) \to H^0(C, D|_C)$ is an isomorphism.
We therefore have $h^0(C, D|_C) = 2$ and every divisor on $C$ linearly equivalent to $D|_C$ is the union of $\pi_1^{-1}(q) \cap C$ and $\pi_2^{-1}(z) \cap C$ for some $z \in \pp^1$.  The linear system $\big|D|_C\big|$ has base locus exactly $\pi_1^{-1}(q) \cap C$.

By assumption $|\Gamma_p|$ is a basepoint free sub-linear series of $\big|D|_C\big|$.  As such, a general element of $|\Gamma_p|$ cannot pass through the basepoints of $\big|D|_C\big|$, and so must be supported in a fiber of the second projection $\pi_2 \colon C \to \pp^1$.  Therefore $\pi_2^*(\O_{\pp^1}(1))|_C -\Gamma_p$ is effective.  
Since $\Gamma_p$ is basepoint free, 
\[\deg(\Gamma_p) \geq \gon(C) = \gamma,\]
by part (a).
We also have $\deg(\pi_2^*( \O_{\pp^1}(1))|_C)=\gamma$, which forces $\Gamma_p = \pi_2^*(\O_{\pp^1}(1))|_C$ for all $p$.  Since $\Gamma_p$ is independent of $p$, the dimension of $A_2$ is $0$.  This contradicts the fact that $A$ has positive dimension. \qedhere

\end{enumerate}
\end{proof}
\subsection{Nonexistence of abelian subvarieties}

We now show how Lemma \ref{produce_bpf_pencil} in combination with Proposition \ref{lin_series_prop}, can prove the nonexistence of positive-dimensional abelian subvarieties in $W_eC$ when $C$ lies on an arbitrary smooth surface with $h^1(S, \O_S) = 0$ and $e$ is small. 

\begin{thm}\label{main_geometric}
Let $S/\cc$ be a nice surface with $h^1(S, \O_S)=0$, and let $C$ be a smooth ample curve on $S$. Then for $e < C^2/9$, the locus $W_eC$ contains no positive-dimensional abelian varieties.
\end{thm}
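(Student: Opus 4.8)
The plan is to argue by contradiction, combining Lemma~\ref{produce_bpf_pencil} and Proposition~\ref{lin_series_prop} with an inductive descent on the degree $e$. Suppose $A \subset W_eC$ is a positive-dimensional abelian variety with $e < C^2/9$, and take $e$ minimal among all such counterexamples (for the fixed curve $C$). By Lemma~\ref{produce_bpf_pencil}, the points $p \in A_2 \subseteq W_{2e}C$ parameterize basepoint-free pencils $\Gamma_p$ of degree $2e$. Since $2e < 2C^2/9 < C^2/4$, Proposition~\ref{lin_series_prop} applies to each $\Gamma_p$ and produces a divisor $D_p$ on $S$ with $h^0(S, D_p) \geq 2$, with $C \cdot D_p < C^2/2$, with $D_p \cdot (C - D_p) \leq 2e$, and with the key property~(4) governing which divisors $E$ force $h^0(\O_S(E - D_p)) = 0$.

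\smallskip

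\noindent\textbf{Rigidifying the family of divisors.} The first real step is to use the hypothesis $h^1(S, \O_S) = 0$ to control how $D_p$ varies with $p$. Since the Picard variety of $S$ is discrete, the numerical (equivalently, linear-equivalence) class of $D_p$ can take only finitely many values as $p$ ranges over the positive-dimensional base $A_2$; after passing to a subvariety of $A_2$ (still positive-dimensional, since $A_2 \cong A$ is irreducible) I may assume $D_p$ lies in a single fixed class $D$. The Bogomolov bounds~(2) and~(3) confine $[D]$ to a bounded region of the ample cone, so there are genuinely only finitely many candidate classes to handle; the point is that now $D$ is \emph{fixed} independent of $p$.

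\smallskip

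\noindent\textbf{The descent.} With $D$ fixed, property~\ref{lin_series_prop}(4) tells me that $\O_C(D|_C - \Gamma_p)$ is effective for every $p$. I want to exploit this to embed $A$ (or a translate) into some $W_fC$ with $f < e$, contradicting minimality. The idea is that $\Gamma_p = D|_C - R_p$ for an effective residual divisor $R_p$ of degree $\deg(D|_C) - 2e = C \cdot D - 2e$. Because $D$ is fixed, the assignment $p \mapsto \O_C(R_p)$ is a morphism from $A_2$ into $\Pic^{f}C$ with $f = C\cdot D - 2e$, and its image is a translate of an abelian subvariety landing in $W_fC$. The arithmetic to check is that $f < e$, i.e. $C \cdot D < 3e$; here I expect to use $D \cdot (C-D) \leq 2e$ together with $C \cdot D < C^2/2$ and the smallness $e < C^2/9$ to force $C \cdot D$ below $3e$ after the stability constraints pin down the class. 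One must also verify the image is genuinely positive-dimensional (the map $p \mapsto \O_C(R_p)$ is nonconstant), which should follow since $D$ is fixed while $\Gamma_p$ genuinely varies over the positive-dimensional $A_2$.

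\smallskip

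\noindent\textbf{The main obstacle.} I expect the hardest part to be the bookkeeping that makes the descent strictly decrease the degree while keeping the image positive-dimensional: one must simultaneously ensure $f < e$ and that $p \mapsto \O_C(R_p)$ does not collapse to a point. The numerical inequality $C \cdot D < 3e$ is exactly where the threshold $C^2/9$ (rather than the weaker $C^2/4$ coming directly from Bogomolov) must enter, presumably by iterating the estimate or optimizing the constraints~(2)--(3) against $e < C^2/9$; tracking this constant carefully is the crux. A secondary subtlety is handling the passage to a fixed class and confirming that the residual-divisor construction respects the abelian-variety structure (i.e. that subtracting the fixed $D|_C$ is a translation, hence sends abelian subvarieties to abelian subvarieties), but that is formal once the class of $D$ has been fixed.
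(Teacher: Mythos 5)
Your overall architecture is the same as the paper's: contradiction with $e$ minimal, basepoint-free pencils $\Gamma_p$ from Lemma \ref{produce_bpf_pencil}, the Bogomolov divisor $D_p$ from Proposition \ref{lin_series_prop}, a rigidification using $h^1(S,\O_S)=0$, and a descent via the residual divisors $D|_C-\Gamma_p$ that contradicts minimality. However, there are two genuine gaps. First, the rigidification as you state it does not work: if you pass to a \emph{proper} subvariety $Z\subsetneq A_2$ on which the class of $D_p$ is constant, then the image of $Z$ under $p\mapsto \O_C(D|_C-\Gamma_p)$ is a translate of $Z$, not of an abelian subvariety, and minimality of $e$ (which concerns abelian varieties in $W_fC$) gives no contradiction. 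You need a \emph{single} class $D$ such that $\O_C(D|_C-\Gamma_q)$ is effective for \emph{every} $q\in A_2$. The paper gets this by observing that for each fixed $p$ the locus $(A_2)_p\colonequals\{q\in A_2 : h^0(\O_C(D_p|_C-\Gamma_q))>0\}$ is closed (semicontinuity of $h^0$), that these loci cover $A_2$ by Proposition \ref{lin_series_prop}(4), and that they take only countably many values because $\Pic(S)$ is countable when $h^1(S,\O_S)=0$; an irreducible complex variety covered by countably many closed subsets equals one of them. Your route would additionally require knowing that $p\mapsto[D_p]$ is constructible, which you do not address.

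Second, the numerical crux — which you correctly identify as the heart of the matter but leave as an expectation — cannot be closed with the ingredients you list. Properties \ref{lin_series_prop}(2) and \ref{lin_series_prop}(3) together with $e<C^2/9$ do \emph{not} imply $C\cdot D<3e$: for instance the numbers $C^2=100$, $e=11$, $C\cdot D=40$, $D^2=18$ satisfy $C\cdot D<C^2/2$, $D\cdot(C-D)=22\leq 2e$, and $e<C^2/9$, yet $C\cdot D\geq 3e$. The missing ingredient is the Hodge index theorem, $D^2C^2\leq(C\cdot D)^2$, which is where the ampleness of $C$ enters a second time. With it, setting $t=C\cdot D$ and combining $t-t^2/C^2\leq D\cdot(C-D)\leq 2e$ with $t<C^2/2$ does force $t<3e$ exactly when $e<C^2/9$, so your direction of the argument is salvageable and is logically equivalent to the paper's; the paper instead runs it in reverse, using minimality to conclude $C\cdot D\geq 3e$, deducing $1/3\leq D^2/(C\cdot D)<1/2$, and then extracting $2C^2/9\leq 2e$ for the contradiction. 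Either way, without the Hodge index theorem the constant $9$ cannot be produced, so as written the proposal has a real hole at its central step.
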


\begin{proof}
Suppose to the contrary that for some $e < C^2/9$, there exists a positive-dimensional abelian variety $A$ contained in $W_eC$.  Choose $e$ minimal.  By Lemma \ref{produce_bpf_pencil}, if $p$ is in $A_2 \subseteq W_{2e}C$, then the corresponding effective line bundle $\O(\Gamma_p)$ moves in a base point free pencil. 

By our hypothesis on $e$, we have that $2e<C^2/4$.  Applying Proposition \ref{lin_series_prop} to the divisor $\Gamma_p$ on $C$, there exists a divisor $D_p$ on $S$ satisfying:\\ 
\indent\ref{lin_series_prop}(2): $C \cdot D_p < C^2/2$; \\
\indent\ref{lin_series_prop}(3): $D_p \cdot (C -D_p) \leq \deg \Gamma_p =2e $; \\ 
\indent\ref{lin_series_prop}(4): $H^0(D_p\vert_C -\Gamma_p)\neq 0$. \\
For each $p$, let $(A_2)_p$ be the locus of $q \in A_2$ such that $\O_C(D_p\vert_C - \Gamma_q)$ is effective.  Then 
\[\bigcup_{p \in A_2} (A_2)_p = A_2, \]
by Proposition \ref{lin_series_prop}(4).
Further, by the upper semicontinuity of $ \dim H^0$, the locus $(A_2)_p$ is closed for any particular $p$, and since $h^1(S,\O_S)=0$, we have that $\Pic(S)$ is discrete and countable. As a result, there must be some single $p$ such that $(A_2)_p = A_2$.  Let $D = D_p$, so $\O_C(D\vert_C - \Gamma_q)$ is effective for all $q \in A_2$. 

The map $A_2 = (A_2)_p\ra W_{C\cdot D-2e}C$ sending a point $p\in A_2$ to the effective divisor class $D\vert_C -\Gamma_p\in W_{C\cdot D-2e}C$ is an embedding. Therefore, $W_{C\cdot D-2e}C$ contains an abelian subvariety, and so by minimality of $e$ we conclude that $C\cdot D-2e \geq e$, and hence 
\begin{equation}\label{bound_CD} C\cdot D \geq 3e. \end{equation}

Set $m_0=D^2/(C\cdot D)$.
As the curve $C$ is ample, the Hodge index theorem implies
\begin{equation}\label{CDCD}
C^2D^2\leq (C\cdot D)^2,
\end{equation}
and so $m_0 C^2 \leq C\cdot D$.
 Combining this with inequalities \ref{lin_series_prop}(2) and \ref{lin_series_prop}(3), respectively, we get
\begin{align}\label{m0_upper} m_0  &\leq \frac{C \cdot D}{C^2} < \frac{1}{2},\\
\label{m0_eqn} m_0 C^2(1 - m_0) &\leq C \cdot D(1-m_0) = D\cdot(C-D) \leq 2e.
 \end{align}
Furthermore, combining inequality \eqref{bound_CD} with \ref{lin_series_prop}(3) we have
\[3e(1-m_0) \leq C\cdot D(1-m_0) \leq 2e, \]
and so together with \eqref{m0_upper}, we have $1/3 \leq m_0 < 1/2$.

The function $m_0(1-m_0)$ is monotonically increasing in the range $[1/3,1/2)$, and so \eqref{m0_eqn} gives
\[\frac{2C^2}{9} \leq m_0(1-m_0)C^2 \leq 2e, \]
so we conclude $C^2\leq 9e$, which 
contradicts our hypothesis.
\end{proof}

For a very ample divisor $P$ on $S$, recall that we define the \defi{exceptional subset} with respect to $P$ to be  
\[\Exc_P \colonequals \big\{\text{integral classes $H$ in $\Amp(S)$ such that } H^2 \leq 9(H \cdot P-1)\big\}.\]

\begin{cor}\label{exceptional_cor}
Let $S/k$ be a smooth projective surface with $h^1(S, \O_S)=0$.
For any very ample divisor $P$, if $C \subset S$ is a smooth curve with class 
\[[C] \in \Amp(S) \smallsetminus \Exc_P,\] 
then for all $e < \gon_k(C)$, the locus $W_eC$ does not contain any positive-dimensional abelian varieties.
\end{cor}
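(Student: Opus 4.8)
The plan is to reduce this to the complex-geometric statement of Theorem \ref{main_geometric}, whose hypothesis is the numerical inequality $e < C^2/9$. First I would unwind the definition of $\Exc_P$: since $[C]\in\Amp(S)$ and $[C]\notin\Exc_P$, the failure of the defining inequality gives $9\,P\cdot C\le C^2$, that is,
\[ P\cdot C \le \frac{C^2}{9}. \]
The heart of the argument is therefore to establish the gonality bound $\gon_k(C)\le P\cdot C$; granting this, every $e<\gon_k(C)$ automatically satisfies $e<C^2/9$, and Theorem \ref{main_geometric} applies verbatim.

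To bound the gonality, I would use that $P$ is very ample: the linear system it defines embeds $S$, and hence $C$, into a projective space $\pp^N_k$, with the degree of the image curve equal to $P\cdot C$. Projection from a general linear subspace $\Lambda\subset\pp^N$ of codimension $2$ defines a rational map $\pp^N\dashrightarrow\pp^1$. Because $\dim C+\dim\Lambda=1+(N-2)<N$, for general $\Lambda$ we have $\Lambda\cap C=\emptyset$, so the projection restricts to a finite morphism $C\to\pp^1$ of degree $\deg C=P\cdot C$. Since $k$ is infinite and the condition $\Lambda\cap C=\emptyset$ is open and dense in the parameter space of such $\Lambda$, I can take $\Lambda$ defined over $k$, so the resulting pencil is defined over $k$; hence $\gon_k(C)\le P\cdot C\le C^2/9$.

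Finally, the conclusion is geometric and descends to $\cc$. Suppose $W_eC$ contained a positive-dimensional abelian variety for some $e<\gon_k(C)$; base-changing along any embedding $\bar{k}\hookrightarrow\cc$ produces a positive-dimensional abelian variety inside $W_e(C_\cc)=(W_eC)_\cc$. The hypotheses $h^1(S,\O_S)=0$, the smoothness and ampleness of $C$, and the self-intersection $C^2$ are all preserved under this base change, and $e<\gon_k(C)\le C^2/9$, so Theorem \ref{main_geometric} over $\cc$ gives a contradiction.

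The step I expect to require the most care is the gonality bound $\gon_k(C)\le P\cdot C$ — specifically, checking that the general codimension-$2$ center $\Lambda$ can be chosen over $k$ (using that the number field $k$ is infinite) and that the restricted projection is genuinely a basepoint-free morphism of the asserted degree rather than a lower one. The remaining pieces — unwinding $\Exc_P$ and descending the abelian-variety hypothesis to $\cc$ — are routine bookkeeping.
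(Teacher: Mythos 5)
Your proposal is correct and follows essentially the same route as the paper: unwind the definition of $\Exc_P$ to get $P\cdot C\le C^2/9$, bound $\gon_k(C)\le P\cdot C$ by projecting from a general codimension-$2$ linear space defined over $k$ (the paper uses $\pp H^0(C,P|_C)^\vee$ rather than the embedding of $S$, an immaterial difference), and then invoke Theorem \ref{main_geometric} after base change to $\cc$. The extra care you flag about choosing $\Lambda$ over the infinite field $k$ is a detail the paper leaves implicit, but your treatment of it is fine.
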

\begin{proof}  Suppose that $e < \gon_k(C)$.  Then $\gon_k(C) \leq P \cdot C$, as exhibited by projection from a codimension $2$ plane in $\pp H^0(C, P|_C)^\vee$.  Therefore $e \leq P\cdot C -1$.  As $[C] \not\in \Exc_P$, we have that $P\cdot C -1 < C^2/9$.  Combining these inequalities gives $e < C^2/9$.
Therefore, by Theorem \ref{main_geometric}, $(W_eC)_\cc$ (and hence $W_eC$) does not contain positive-dimensional abelian varieties.
\end{proof}

To imply the result stated in the introduction, we need the following elementary results about the intersection
\[\Exc_P(N) \colonequals \Exc_P\cap N,\]
for $N$ a closed subcone of the ample cone.

\begin{lem}\label{finiteness_1}
For any closed subcone $N$ and any very ample divisor $P$ on $S$, the set $\Exc_P(N)$ of exceptional classes in $N$ with respect to $P$ is finite.
\end{lem}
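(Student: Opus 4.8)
The plan is to show that membership in $\Exc_P$ forces $P \cdot H$ to be bounded, and then to observe that only finitely many integral classes can have bounded $P$-degree because they are confined to a bounded region of the finite-rank lattice $\NS(S)$. The mechanism is that along any ray in the ample cone the self-intersection $H^2$ grows quadratically while $P \cdot H$ grows only linearly, so the inequality $9 P \cdot H > H^2$ can hold only near the origin. The role of the hypothesis that $N$ is a \emph{closed} subcone of the ample cone is to keep this quadratic growth uniform by bounding $H^2$ away from $0$, a control that would otherwise fail near the boundary of $\Amp(S)$.

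Concretely, I would first slice the cone by the affine hyperplane $\{H : P \cdot H = 1\}$, setting $K := N \cap \{P \cdot H = 1\}$, and claim that $K$ is compact. It is certainly closed; for boundedness, suppose $H_n \in K$ with $|H_n| \to \infty$ and pass to a limit $v$ of the unit vectors $H_n/|H_n|$. Since $N$ is a closed cone, $v \in N$ and $v \neq 0$, so $v$ is ample; but $P \cdot v = \lim |H_n|^{-1}(P \cdot H_n) = \lim |H_n|^{-1} = 0$, contradicting that the product of two ample classes on a surface is strictly positive. Hence $K$ is compact, and the continuous function $H \mapsto H^2$ attains a minimum $m$ on $K$; because every nonzero class of $N$ is ample, $m > 0$ (the case $K = \emptyset$, i.e.\ $N \smallsetminus \{0\} = \emptyset$, is trivial since $0 \notin \Amp(S)$).

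Next, for any nonzero $H \in N$ I would write $t := P \cdot H > 0$ and $H' := H/t \in K$, so that $H^2 = t^2 (H')^2 \geq t^2 m$. If $H \in \Exc_P$ then $9t = 9(P \cdot H) > H^2 \geq t^2 m$, whence $t < 9/m$. Thus every class in $\Exc_P(N)$ satisfies $P \cdot H < 9/m$ and therefore lies in the set $\{t H' : 0 \leq t < 9/m,\ H' \in K\}$, which is bounded because $K$ is compact. Finally, since the intersection-theoretic conditions are numerical and $\NS(S)$ has finite rank, only finitely many integral classes occupy any bounded region of $\NS(S) \otimes \rr$, so $\Exc_P(N)$ is finite.

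The main obstacle is the compactness of the slice $K$, i.e.\ controlling the behavior near the boundary of the ample cone: an integral class can have arbitrarily large norm while $P \cdot H$ stays bounded if its direction limits onto $\partial\Amp(S)$, and it is precisely the closedness of $N$ together with the fact that its nonzero classes are ample that rules this out. Everything after the compactness claim is the routine linear-versus-quadratic growth comparison combined with the discreteness of the Néron--Severi lattice.
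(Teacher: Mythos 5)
Your proof is correct and follows essentially the same route as the paper: the paper abstracts the argument into a general lemma about degree-one homogeneous positive functions on closed cones (applied to $H \mapsto H^2/(9P\cdot H)$), normalizing on the unit sphere rather than on the slice $\{P\cdot H = 1\}$, but the substance---compactness of a cross-section of the closed subcone, a positive lower bound from that compactness, the quadratic-versus-linear growth comparison, and discreteness of the lattice---is identical. No gaps.
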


We will deduce this from the following elementary result.

\begin{lem}\label{cone_lemma}
Suppose that $N \subset \rr^n$ is a closed cone and let $f \colon N \to \rr$ be a continuous function taking positive values away from $0$.  Let $\Lambda$ be any lattice in $\rr^n$. If for all $H \in N$ and all $\lambda\geq 0$ , we have
\[f(\lambda H ) = \lambda f(H), \]
then for any $c\in \rr$, the set
\[
\{H\in N : f(H)\leq c\}\cap \Lambda
\]
is finite.
\end{lem}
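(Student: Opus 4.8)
The plan is to exploit the positive homogeneity of $f$ to reduce the problem to the compact slice of the cone by the unit sphere, where continuity and strict positivity yield a uniform linear lower bound on $f$. First I would record the trivial consequence of homogeneity that $f(0) = 0$, obtained by taking $\lambda = 0$ in the relation $f(\lambda H) = \lambda f(H)$. This disposes of the case $c < 0$: no point of $N$ can satisfy $f(H) \le c < 0$, so the set in question is empty. Hence we may assume $c \ge 0$.

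Next I would set $K \colonequals N \cap S^{n-1}$, the intersection of the cone with the unit sphere. Since $N$ is closed and $S^{n-1}$ is compact, $K$ is compact. If $K = \emptyset$ then $N = \{0\}$, so the set $\{H \in N : f(H) \le c\} \cap \Lambda$ is contained in $\{0\}$ and is finite; thus assume $K \ne \emptyset$. On $K$ the restriction of $f$ is continuous and takes only positive values, since every point of $K$ is nonzero. By compactness it therefore attains a positive minimum $\delta > 0$.

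The key step is then to propagate this bound to all of $N$. For any nonzero $H \in N$, writing $H = \|H\| \cdot (H/\|H\|)$ with $H/\|H\| \in K$, homogeneity gives
\[ f(H) = \|H\|\, f\!\left(\tfrac{H}{\|H\|}\right) \ge \delta \|H\|. \]
Consequently $f(H) \le c$ forces $\|H\| \le c/\delta$, so the entire sublevel set $\{H \in N : f(H) \le c\}$ lies in the ball of radius $c/\delta$ about the origin and is in particular bounded.

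Finally I would invoke the discreteness of the lattice: any lattice $\Lambda \subset \rr^n$ meets a bounded set in only finitely many points. Intersecting the bounded sublevel set with $\Lambda$ therefore produces a finite set, as claimed. I do not expect a genuine obstacle in this argument; the only point requiring care is the compactness step producing the uniform constant $\delta$, which uses crucially that $N$ is \emph{closed} (so that $K$ is compact) and that $f$ is strictly positive away from the origin (so that the minimum $\delta$ is strictly positive rather than merely nonnegative).
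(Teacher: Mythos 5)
Your argument is correct and is essentially identical to the paper's: both restrict $f$ to the compact set $N \cap S^{n-1}$, extract a positive minimum $\delta$ there, use homogeneity to conclude $f(H) \ge \delta\|H\|$ so that the sublevel set is bounded, and then intersect with the discrete lattice. Your extra attention to the degenerate cases $c<0$ and $N=\{0\}$ is harmless but not needed beyond what the paper already does.
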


\begin{proof}
Let $\ss$ be the unit sphere in $\rr^n$. Set $c_{min}=\inf\{f(H)\vert H\in \ss\cap N\}$. Since $\ss$ is compact and $N$ is closed, the intersection $\ss\cap N$ is compact.  So this minimum is achieved by $f$ on $\ss\cap N$, and in particular $c_{min}>0$. By the hypothesis $f(\lambda H ) = \lambda f(H)$, we then have that $f(H)>rc_{min}$ for all $H\in N\setminus B_r$, where $B_r$ is the closed ball of radius $r$. Then for any $c>0$, the set
\[
\{H\in N\vert f(H)\leq c\} 
\]
is a closed set contained in the compact set $B_{c/c_{min}}$, and is hence compact. So its intersection with the discrete set $\Lambda$ is finite.
\end{proof}

\begin{proof}[{Proof of Lemma \ref{finiteness_1}}]
If $H^2 \leq 9(P \cdot H -1)$, then $H^2 < 9H \cdot P$; hence it suffices to show that there are finitely many such integral classes $H$ in $N$.
Let $f \colon N  \smallsetminus \{0\} \to \rr$ be the continuous function
\[H \mapsto f(H) = \frac{H^2}{9 P \cdot H}. \]
As $H$ and $P$ are both ample, the function $f$ is positive and clearly satisfies $f(\lambda H) = \lambda f(H)$.  Therefore by Lemma \ref{cone_lemma} there are only finitely many integral classes $H$ for which
\[ f(H ) = \frac{H^2}{9 P \cdot H} < 1. \qedhere \]
\end{proof}

\subsection{Example: curves on \texorpdfstring{$\pp^1\times \pp^1$}{P1 X P1}}

When the divisor structure on $S$ is sufficiently well understood, our techniques allow one to explicitly compute the exceptional set for the entire ample cone.
We present one example here.
\begin{prop}\label{p1_times_p1}
Let $S=\pp^1\times \pp^1$, and let $C$ be a smooth curve of any bidegree $(d_1,d_2)$ with $d_1\leq d_2$ and $(d_1,d_2)\neq (3,3)$ or $(2,2)$. Then, for $e<d_1$,  $W_eC$ contains no positive-dimensional abelian varieties.
\end{prop}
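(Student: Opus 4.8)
The plan is to follow the same template used in Theorem~\ref{every_value_geometric}(c) and Theorem~\ref{main_geometric}: argue by contradiction, use Lemma~\ref{produce_bpf_pencil} to produce a basepoint-free pencil $\Gamma_p$ of degree $2e$ from the hypothetical abelian variety $A \hookrightarrow W_eC$, apply Proposition~\ref{lin_series_prop} to obtain a divisor $D_p$ on $S = \pp^1\times\pp^1$, and then exploit the explicit geometry of $\pp^1\times\pp^1$ to rule out all possibilities for the numerical class of $D_p$. I would begin by recording the basic numerics: for $C$ of bidegree $(d_1,d_2)$ we have $C^2 = 2d_1 d_2$, and writing $D_p$ in numerical class $(x,y)$ (with $x,y \geq 0$) the relevant intersection numbers are $C\cdot D_p = d_2 x + d_1 y$ and $D_p\cdot(C-D_p) = d_2 x + d_1 y - 2xy$.

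\textbf{Translating the constraints.} Choose $e < d_1$ minimal with $A \hookrightarrow W_eC$, so $\Gamma_p$ has degree $2e \leq 2(d_1-1)$. The hypothesis that $(d_1,d_2)$ is not $(2,2)$ or $(3,3)$ should be exactly what guarantees $2e < C^2/4 = d_1 d_2/2$, allowing Proposition~\ref{lin_series_prop} to apply; I would verify this first, as it is where the excluded cases enter. Properties \ref{lin_series_prop}(1)--(3) then give $h^0(D_p)\geq 2$ (so $D_p$ moves in at least a pencil, forcing $x\geq 1$ or $y\geq 1$), together with
\[
d_2 x + d_1 y < d_1 d_2, \qquad d_2 x + d_1 y - 2xy \leq 2e \leq 2(d_1-1).
\]
As in the proof of \ref{every_value_geometric}(c), I would rearrange these into the symmetric form $d_2(d_1/2 - x) + d_1(d_2/2 - y) > 0$ and $(d_1/2 - x)(d_2/2 - y) > (\text{positive constant})$, deduce that both factors are positive, and then bound $x$ and $y$ to conclude that the only surviving numerical classes for a divisor moving in a pencil are $(1,0)$, $(0,1)$, and $(1,1)$.

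\textbf{Eliminating the surviving classes.} For classes $(1,0)$ and $(0,1)$ the divisor $D_p$ is a single ruling, hence $h^0(D_p) = 2$ and $D_p|_C$ is one of the two projection pencils; property \ref{lin_series_prop}(4) forces $\Gamma_p$ to be a sub-pencil of $D_p|_C$, but $D_p|_C$ has degree $d_2$ or $d_1 \geq \gon(C) = d_1$ (using part~(a)-type reasoning, i.e. that the minimal-degree pencil is the $(1,0)$-ruling of degree $d_1$), so comparing degrees pins $\Gamma_p$ down to a single fixed pencil independent of $p$, collapsing $A_2$ to a point and contradicting $\dim A > 0$. For class $(1,1)$ I would run the same argument as the final paragraph of \ref{every_value_geometric}(c): a $(1,1)$-divisor on $\pp^1\times\pp^1$ through $D_p$ satisfies $h^0 = $ appropriate small value and, via the K\"unneth formula together with $h^i(S, D-C) = 0$, the restriction $|D|_C|$ has a base locus given by one fixed ruling slice, forcing $\Gamma_p$ to lie in the complementary ruling pencil and hence again to be a single fixed pencil of degree $d_1$, a contradiction.

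\textbf{Main obstacle.} The delicate step is confirming that the excluded bidegrees $(2,2)$ and $(3,3)$ are precisely the cases where the inequality $2e < C^2/4$ can fail (when $e = d_1 - 1$), and more generally ensuring the inequalities $(d_1/2 - x)(d_2/2 - y) > c$ have $c \geq 0$ under the stated hypotheses so that both factors are genuinely forced positive; this is where the case analysis is tightest. The rest is a faithful adaptation of the already-established $E\times\pp^1$ argument, with $\gon(C) = d_1$ playing the role that $\gamma$ played there, so I expect the geometric elimination of the $(1,1)$ class to go through verbatim once the numerics are secured.
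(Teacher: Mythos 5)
Your overall template (contradiction, Lemma \ref{produce_bpf_pencil}, Proposition \ref{lin_series_prop}, reduction to the numerical classes $(1,0)$, $(0,1)$, $(1,1)$) matches the paper, and your treatment of the classes $(1,0)$ and $(0,1)$ is essentially the paper's: property (3) forces $\deg(D|_C-\Gamma_p)=D\cdot C-2e\leq D^2=0$, so $A_2$ embeds in $W_0C=\mathrm{pt}$. But your elimination of the class $(1,1)$ has a genuine gap. The final paragraph of Theorem \ref{every_value_geometric}(c) does \emph{not} transfer to $\pp^1\times\pp^1$: there the key point was that $h^0(E,\O_E(q))=1$, so $|F_1+F_2|$ on $E\times\pp^1$ is a pencil all of whose members contain the fixed fiber $\pi_1^{-1}(q)$, and $\bigl|D|_C\bigr|$ has base locus $\pi_1^{-1}(q)\cap C$. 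On $\pp^1\times\pp^1$ the class $(1,1)$ is the Segre hyperplane class, with $h^0=4$ and base point free (and $h^0(C,D|_C)=4$ since $h^0(S,D-C)=h^1(S,D-C)=0$), so there is no ``fixed ruling slice'' base locus and no way to force $\Gamma_p$ into the complementary ruling. The paper instead disposes of $(1,1)$ purely numerically: property (3) gives $d_1+d_2-2\leq 2e$, which combined with $2e\leq 2d_1-2\leq d_1+d_2-2$ forces $d_1=d_2$ and $e=d_1-1$; then $\deg(D|_C-\Gamma_p)\leq D^2=2$ yields an embedding $A_2\hookrightarrow W_2C$, contradicting minimality of $e$ because $e=d_2-1\geq 3>2$. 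You would need to supply this (or an equivalent) argument.

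Relatedly, you have misplaced where the exclusion of $(3,3)$ enters. For $(3,3)$ one has $2e\leq 4<C^2/4=9/2$, so Proposition \ref{lin_series_prop} applies without difficulty; only $(2,2)$ fails at that step (and the paper handles $d_1\leq 2$ separately by genus considerations, reducing to $d_1\geq 3$, $d_2\geq 4$). The bidegree $(3,3)$ survives every numerical step up to and including the reduction to $(x,y)=(1,1)$ with $d_1=d_2$ and $e=d_1-1=2$; the argument only breaks there because $e=2$ is not $>2$, consistent with the fact (Remark \ref{p1xp1_sharp}) that bielliptic genus-$4$ curves of bidegree $(3,3)$ genuinely have an elliptic curve in $W_2C$. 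So the ``delicate step'' you flag is not where the case analysis is actually tight, and your plan as written would not reveal why $(3,3)$ must be excluded.
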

\begin{rem}\label{p1xp1_sharp}
The assumption that $(d_1,d_2)\neq (3,3)$ or $(2,2)$ in the proposition is necessary, as we now explain.  The smooth $(3,3)$ curves on $\pp^1 \times \pp^1$ (the complete intersection of a quadric and a cubic surface under the embedding of $\pp^1 \times \pp^1$ in $\pp^3$ by $\O(1,1)$) are canonical curves of genus $4$, and there exist bielliptic genus $4$ curves.  Explicitly, if the cubic surface is the cone over a smooth plane cubic and the quadric is general, then projection from the cone point gives a degree $2$ map from the curve to the cubic plane curve.

Likewise, if $(d_1,d_2)=(2,2)$, then $C$ is elliptic and $W_1C = C$.
\end{rem}
\begin{proof}
The cases of curves with $d_1\leq 1$ are trivial. If $d_1=2$, by assumption $d_2\geq 3$, so the genus of the curve is $d_2-1 >1$, and $W_1C$ contains no positive-dimensional abelian varieties. So we may assume $d_1\geq 3$ and $d_2\geq 4$. Let $C$ be a smooth curve of bidegree  $(d_1,d_2)$ with $d_1\geq 3$ and $d_2\geq 4$, and suppose the conclusion of the proposition fails for $C$. Let $e$ be minimal such that $W_eC$ contains a positive dimensional abelian variety $A$ so $e < d_1$. 
By Lemma \ref{produce_bpf_pencil}, the points $p$ of $A_2$ give rise to basepoint free pencils $\Gamma_p$ of degree $2e$. Then
\[\deg \Gamma_p = 2e <2d_1\leq d_1d_2/2 = C^2/4.\]

So we apply Proposition \ref{lin_series_prop} to guarantee the existence of an  effective divisor $D$, say of class $(x,y)$ with $x,y \geq 0$, satisfying \\
\indent\ref{lin_series_prop}(2): $d_1y + d_2x = C \cdot D  < C^2/2 = d_1d_2$; \\
\indent\ref{lin_series_prop}(3): $d_1y + d_2x - 2xy = D\cdot(C-D) \leq 2e < 2d_1 \leq 2d_2$.\\
In exactly the same way as in the proof of Theorem \ref{every_value_geometric}, this forces $x,y \leq 1$.
Thus $(x,y)$ is $(0,1)$, $(1,0)$, or $(1,1)$.  As in the proof of Theorem \ref{main_geometric}, there is a single choice of divisor class $D$ such that $\O_C(D|_C - \Gamma_p)$ is effective for all $p$.  In the first two cases, sending $\Gamma_p$ to the effective divisor of class $D|_C-\Gamma_p$, whose degree is $D\cdot C - 2e \leq D^2 = 2xy =0$ by \ref{lin_series_prop}(3), induces an isomorphism between $A_2$ and $W_{0}(C)=\mathrm{pt}$, which contradicts that $A$ is positive-dimensional. 

Now we consider the case $(x,y)=(1,1)$. By \ref{lin_series_prop}(3) we have the inequality
\[d_1+d_2-2 \leq 2e. \]
Combining this with 
\[2e \leq 2d_1 -2 \leq d_1 + d_2 -2\]
shows that equality must hold everywhere.  Therefore $d_1 = d_2$ and $e = d_1 -1$.
Now $D\cdot C - \deg \Gamma_p\leq D^2 = 2$, so we have an inclusion $A_2\ra W_2C$, so $W_2C$ contains a positive-dimensional abelian variety. This is a contradiction since we have $e= d_2-1\geq 3$ and assumed $e$ was minimal such that $W_eC$ contains an abelian variety.
\end{proof}

\section{Number-Theoretic Consequences}

Lang's general conjecture \cite[{$\S 3$, Statement 3.6}]{Lang} on rational points is known in its entirety for subvarieties of abelian varieties by the work of Faltings.

\begin{thm}[Faltings \cite{faltings1}]\label{faltings}
Let $k$ be a number field.  Let $X \subset A$ be a subvariety of an abelian variety $A$ over $k$.  Then there exist finitely many translates of abelian subvarieties 
\[Z_i = z_i + B_i,\qquad (B_i \subseteq A \text{ abelian subvariety})\]
that contain all of the rational points of $X$.  In particular, if $X(k)$ is infinite, then $X$ contains a translate of a positive-dimensional abelian subvariety of $A$.
\end{thm}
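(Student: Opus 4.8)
The plan is to deduce the ``in particular'' clause formally from the main assertion, and to treat the main assertion itself by the arithmetic method of Vojta as generalized by Faltings. For the formal deduction, note that the $Z_i = z_i + B_i$ may be taken to lie inside $X$ (they are the maximal translated abelian subvarieties contained in $X$). If $X(k)$ is infinite, then since finitely many $Z_i$ cover $X(k)$ outside a finite set, at least one $Z_i$ must contain infinitely many rational points, and hence satisfy $\dim Z_i = \dim B_i \geq 1$; this $Z_i \subseteq X$ is the desired positive-dimensional translated abelian subvariety.

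For the main assertion I would first reduce to a ``general type'' situation. Let $\mathrm{Sp}(X)$ denote the union of all positive-dimensional translated abelian subvarieties of $A$ contained in $X$. By the Ueno--Kawamata structure theorem this locus is Zariski closed and is a \emph{finite} union of such translates $Z_i$. It therefore suffices to prove that $X(k) \smallsetminus \mathrm{Sp}(X)$ is finite, i.e.\ to handle the case in which $X$ contains no positive-dimensional translated abelian subvariety at all. Next I would install the height formalism: fix a symmetric ample divisor on $A$ and let $\hat{h}$ be the associated N\'eron--Tate canonical height. It is a positive-definite quadratic form on the finite-rank real vector space $V := A(k) \otimes_{\mathbb{Z}} \mathbb{R}$, inducing a Euclidean norm and an angle between points. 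The elementary input here is Mumford's gap principle, which shows that rational points of $X$ pointing in nearly the same direction in $V$ have heights that grow at least geometrically; this lets me extract points $P, Q \in X(k)$ with $\hat{h}(Q)$ enormously larger than $\hat{h}(P)$ and with the angle between their images in $V$ as small as desired.

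The heart of the argument is then Vojta's auxiliary-section construction on $X \times X$. For suitable large integers $d_1, d_2$ whose ratio $d_1/d_2$ is matched to $\hat{h}(Q)/\hat{h}(P)$, one produces, by Siegel's lemma (equivalently arithmetic Riemann--Roch together with Minkowski's theorem), a nonzero global section $s$ of a line bundle of bidegree $(d_1, d_2)$ on $X \times X$ having small arithmetic height and prescribed high-order vanishing along a diagonal-type locus. One then studies the index of $s$ at $(P,Q)$: the arithmetic data---small height of $s$, large heights of $P$ and $Q$, and the small angle between them---force a \emph{lower} bound on this index, whereas Faltings' product theorem forces an incompatible \emph{upper} bound unless $(P,Q)$ lies in the exceptional locus that the product theorem cuts out, a locus governed by translated abelian subvarieties. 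Since $X$ was assumed to contain none of positive dimension, this is a contradiction, and so $X(k) \smallsetminus \mathrm{Sp}(X)$ is finite.

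The main obstacle is exactly this last double-bound step. Producing the Vojta section with \emph{simultaneously} small height and high vanishing, and then extracting from the product theorem an exceptional locus controlled by translated abelian subvarieties, requires delicate estimates relating the N\'eron--Tate quadratic form on $V$ to intersection numbers on $X \times X$, together with the full strength of Faltings' product theorem. Making the two index estimates quantitatively incompatible is the deep geometric and arithmetic core of the proof; by comparison, the reduction via the Ueno--Kawamata structure theorem and Mumford's gap principle are comparatively formal once the height machinery is in place.
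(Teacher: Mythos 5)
The paper does not prove this statement at all: it is quoted verbatim as an external input, attributed to Faltings, and everything downstream (Lemma \ref{airr_eq} and the main theorems) treats it as a black box. So there is no internal proof to compare yours against. That said, your outline is a faithful summary of how Faltings actually proves it: the reduction of the ``in particular'' clause is correct \emph{provided} one uses the stronger form of the theorem in which the $Z_i$ are contained in $X$ (which is how Faltings states it, and which you rightly note -- as literally written in the paper, with $Z_i$ merely translates in $A$, the clause would not follow formally); the reduction to the case $\mathrm{Sp}(X) = \emptyset$ via the Ueno--Kawamata structure theorem, the cone decomposition of $A(k)\otimes\mathbb{R}$ under the N\'eron--Tate form with Mumford-type gap estimates, and the Vojta construction on $X\times X$ with the two incompatible index bounds (lower bound from heights, upper bound from the product theorem) are exactly the architecture of Faltings' two papers.

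The caveat is that what you have written is a roadmap, not a proof. The entire mathematical content lives in the steps you defer: constructing the small-height, high-index section (arithmetic Riemann--Roch/Siegel's lemma with uniform control of the constants), proving the arithmetic lower bound on the index at $(P,Q)$, and proving the product theorem together with the identification of its exceptional locus with translated abelian subvarieties. You acknowledge this, which is honest, but it means the proposal cannot be assessed as a complete argument -- it is a correct citation-level account of a theorem the paper also only cites. For the purposes of this paper, nothing more is needed; if you intended to actually supply a proof, essentially all of the work remains.
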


Recall that $C^{(e)}$ is the $e$th symmetric power of $C$, and $W_eC$ is the image of $C^{(e)} \to \Pic^eC$.
We will apply Faltings' theorem to $W_eC$.  

\begin{lem}\label{airr_eq}
Let $e_0 \leq \gon_k(C)$ be some positive integer.  If for all $e < e_0$, the subvariety $W_eC_{\bar{k}} \subseteq \Pic^e C_{\bar{k}}$ does not contain any positive-dimensional abelian varieties, then 
\[\airr_k(C) \geq e_0.\] 
In particular, if this holds with $e_0 = \gon_k(C)$, then $\airr_k(C) = \gon_k(C)$.
\end{lem}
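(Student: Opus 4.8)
The plan is to connect the arithmetic invariant $\airr_k(C)$ to the geometry of $W_eC$ via Faltings' theorem, by translating statements about degree-$e$ points into statements about rational points on $W_eC$. Recall that a point $P \in C(\bar{k})$ of degree $e$ corresponds to an effective divisor $\sum_{\sigma} \sigma(P)$ of degree $e$ defined over $k$, hence a $k$-rational point of the symmetric power $C^{(e)}$. Thus $C_e$ being infinite is closely related to $C^{(e)}(k)$ being infinite, and via the map $C^{(e)} \to \Pic^e C$ with image $W_eC$, to the infinitude of $W_eC(k)$.

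First I would argue the contrapositive: suppose $\airr_k(C) < e_0$, so there is some $e < e_0$ with $C_e$ infinite. Since $e < e_0 \leq \gon_k(C)$, I claim the fibers of $C^{(e)} \to W_eC$ are finite. Indeed, a positive-dimensional fiber over a class $L$ means $h^0(C,L) \geq 2$, giving a map to $\pp^1$ of degree $e < \gon_k(C)$, a contradiction. Therefore infinitely many degree-$\leq e$ points (equivalently, infinitely many $k$-points of $C^{(e)}$, after noting that a point of exact degree $e' \leq e$ lands in a suitable symmetric power and can be padded to degree $e$ using a fixed $k$-rational effective divisor if necessary) forces infinitely many $k$-points on $W_eC$, since the map $C^{(e)}(k) \to W_eC(k)$ has finite fibers. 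Some care is needed with the padding and with points of degree strictly less than $e$, but the essential point is that $W_eC(k)$ must be infinite.

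Next I would invoke Faltings' theorem (Theorem \ref{faltings}) applied to $W_eC$, viewed inside $\Pic^e C$, which after choosing a basepoint is a subvariety of the abelian variety $\Jac_C$. Since $W_eC(\bar{k}) \supseteq W_eC(k)$ is infinite, Faltings' theorem guarantees that $W_eC_{\bar{k}}$ contains a translate of a positive-dimensional abelian subvariety. This directly contradicts the hypothesis that for all $e < e_0$, the locus $W_eC_{\bar{k}}$ contains no positive-dimensional abelian varieties. Hence no such $e$ exists, and $\airr_k(C) \geq e_0$. The final sentence of the lemma then follows immediately by combining $\airr_k(C) \geq \gon_k(C)$ with the inequality $\airr_k(C) \leq \gon_k(C)$ from \eqref{upper_bound}.

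I expect the main obstacle to be the bookkeeping in the first step: carefully relating the infinitude of $C_e$ (points of degree \emph{at most} $e$) to the infinitude of $k$-points on $W_eC$ specifically, rather than on some $W_{e'}C$ for $e' < e$. One must ensure that infinitely many low-degree points genuinely produce infinitely many points in a single $W_eC$, handling the possibility that points cluster in smaller symmetric powers. The cleanest route is to observe that if $C_{e'}$ is infinite for some $e' \leq e$, then applying the argument at that $e'$ (using $e' < \gon_k(C)$ to get finite fibers of $C^{(e')} \to W_{e'}C$) produces a positive-dimensional abelian variety in $W_{e'}C_{\bar{k}}$, still contradicting the hypothesis since $e' < e_0$. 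This sidesteps the padding issue entirely and is the approach I would ultimately adopt.
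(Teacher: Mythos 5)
Your proposal is correct and follows essentially the same route as the paper: use $e < \gon_k(C)$ to show $C^{(e)}(k) \to W_eC(k)$ has trivial (in the paper, injective) fibers, then apply Faltings' theorem to $W_eC \subseteq \Pic^eC$ to equate infinitude of rational points with the presence of a positive-dimensional abelian subvariety. The bookkeeping about points of degree strictly less than $e$ that you worry about is handled implicitly in the paper by running the argument for every $e < e_0$ separately, which is exactly the resolution you settle on.
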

\begin{proof}
If $e < \gon_k(C)$, then $C^{(e)}(k) \to W_eC(k)$ is injective, so $C^{(e)}(k)$ is finite if and only if $W_eC(k)$ is finite.
As $W_eC$ is a subvariety of the torsor $\Pic^eC$ of the abelian variety $\Pic^0C$, Faltings' theorem implies that the set of points $W_eC(L)$ is finite for all finite extensions $L/k$ if and only if $W_eC_{\bar{k}}$ does not contain any positive-dimensional abelian varieties. 
\end{proof}

\begin{rem}\label{geometric_property}
Lemma \ref{airr_eq} shows that $\airr_{\bar{k}}(C) \leq e$ if and only if $\gon(C_{\bar{k}}) \leq e$ or $W_fC_{\bar{k}}$ contains a positive-dimensional abelian subvariety for some $f\leq e$.  Thus $\airr_{\bar{k}}(C)$ depends only on $C_{\bar{k}}$.
\end{rem}

\begin{proof}[Proof of Theorem \ref{every_value}]
Suppose that $\gamma, \alpha$ are such that $0 < \gamma/2 \leq \alpha \leq \gamma$.  It suffices to find a nice curve $C$ over $\qq$ such that $\airr_{\qq}(C) = \airr_{\bar{\qq}}(C) = \alpha$ and $\gon_{\qq}(C) = \gon_{\bar{\qq}}(C) = \gamma$.  

For $\gamma = 1$, then $\alpha =1$ and we take $C = \pp^1_\qq$.  If $\gamma = 2$ and $\alpha = 1$, we may take $C$ to be an elliptic curve over $\qq$ of positive rank.  For $\gamma = 2$ and $\alpha = 2$, we may take $C$ to be any hyperelliptic curve of genus at least $2$.  For $\gamma =3$ and $\alpha = 2$, we may take any non-hyperelliptic curve that is a double cover of a positive-rank elliptic curve (see Remark \ref{p1xp1_sharp} for a construction in genus $4$).  For $\gamma = 3$ and $\alpha = 3$, we may take any non-hyperelliptic, non-bielliptic trigonal curve by the work of Harris-Silverman \cite[Corollary 3]{harris_silverman} and Lemma \ref{airr_eq} (e.g., a canonical curve of genus $4$ that is non-bielliptic).

Therefore, we may assume that $\gamma \geq 4$ (and so $\alpha\geq 2$).  Let $E$ be a positive rank elliptic curve over $\qq$.  By Theorem \ref{every_value_geometric} a smooth curve $C$ on $E\times \pp^1_{\qq}$ in numerical class $(\gamma, \alpha)$ has $\gon_{\bar{\qq}}(C) = \gamma$ and $\airr_{\bar{\qq}}(C) \geq \alpha$.  As $C$ has a map $\pi_1$ of degree $\alpha$ to $E$, we further have $\pi_1^{-1}(E(\qq)) \subseteq C_\alpha$, and so $\airr_{\bar{\qq}}(C) \leq \alpha$; therefore equality holds.
\end{proof}

\begin{proof}[Proof of Theorem\ref{main_q0}]

Suppose that $C \hookrightarrow S/k$ is a smooth ample curve.  
Let 
\[e_0 \colonequals \min\left( \gon_k(C), \frac{C^2}{9}\right). \]
Then by Theorem \ref{main_geometric}, $W_eC$ contains no positive-dimensional abelian varieties for $e < e_0 \leq \gon_k(C)$.  Therefore, by Lemma \ref{airr_eq}, we have that $\airr_k(C) \geq e_0$. 
Now let $P$ be any choice of very ample divisor on $S$.  If $9C \cdot P \leq C^2$ (i.e. $[C] \not\in \Exc_P$), then by Corollary \ref{exceptional_cor}, $W_eC$ contains no positive-dimensional abelian varieties for $e < \gon_k(C)$.  Therefore $\airr_k(C) = \gon_k(C)$ by Lemma \ref{airr_eq}.

On the other hand, for any closed subcone $N \subseteq \Amp(S)$, Lemma \ref{finiteness_1} guarantees that $\Exc_P \cap N$ is finite.
\end{proof}

\begin{proof}[Proof of Corollary \ref{main_pic1}]
If $\Pic(S_{\bar{k}}) \simeq \zz \cdot \O_S(1)$ for a very ample line bundle $\O_S(1)$ and $\O_S(C) \simeq \O_S(\alpha)$, then $[C] \not\in \Exc_{\O_S(1)}$ is equivalent to $\alpha \geq 9$.
\end{proof}

\begin{proof}[Proof of Corollary \ref{main_ci}]
The gonality of any complete intersection curve $C \subset \pp^r_k$ of type $(d_1, d_2, \dots, d_{r-1})$ is at most $\deg C = d_1 d_2 \cdots d_{r-1}$.  It therefore suffices by Lemma \ref{airr_eq} to show the nonexistence of abelian subvarieties in $W_e(C_\cc)$ for $e < d_1d_2 \cdots d_{r-1}$.
By \cite[Theorem 3.1]{ullery}, if $4 \leq d_1 < d_2 \leq \cdots \leq d_{n-1}$, then $C_\cc$ lies on a smooth complete intersection surface $S/\cc$ of type $(d_2, \cdots, d_{n-1})$ with $\Pic S \simeq \zz\cdot[\O_S(1)]$ and $[C]$ equals $\O_S(d_1)$.  Therefore, the result follows from Theorem \ref{main_pic1} with $P = \O_S(1)$, as $P \cdot C = d_1d_2 \cdots d_{r-1}$. 
\end{proof}

\begin{proof}[Proof of Theorem \ref{main_p1xp1}]
Let $C$ be a nice curve of type $(d_1, d_2)$ on $\pp^1_k \times \pp^1_k$ with $1 \leq d_1 \leq d_2$.  Then we claim that $\gon_k(C) = \gon_{\bar{k}}(C) = d_1$.  The upper bound is provided by the projection
\[ C \hookrightarrow \pp^1_k \times \pp^1_k \to \pp^1_k\]
onto the first factor.  As the tensor product of a $p$-very ample and a $q$-very ample bundle is $(p+q)$-very ample, the line bundle 
\[K_C = \O_C(d_1-2, d_2-2) = \O_C(d_1 - 2, d_1 - 2) \otimes \O_C(0, d_2 - d_1)\] 
is $(d_1-2)$-very ample (as $\O_C(0, d_2 - d_1)$ is either trivial or base point free.)  Therefore we have the lower bound $\gon_{\bar{k}}(C) \geq d_1$ (in fact even a weaker statement is true, see \cite[Lemma 1.3]{bdelu}).
By Proposition \ref{p1_times_p1}, $W_eC$ contains no positive-dimensional abelian subvarieties for $e <d_1$ as long as $(d_1, d_2) \neq (2,2)$ or $(3,3)$.  Therefore $C_{d_1-1}$ is finite for all such $(d_1, d_2)$ and $\airr_k(C) = \gon_k(C) = d_1$.
If $(d_1, d_2) = (2,2)$, then $C_{\bar{k}}$ is an elliptic curve and so $\airr_{\bar{k}}(C) = 1$.  
If $(d_1,d_2)=(3,3)$, then $C$ is a canonical curve of genus 4, and in particular is not hyperelliptic.  If $C$ is bielliptic, then $(C_K)_2$ is infinite for any finite extension $K$ of $k$ over which the underlying genus 1 curve acquires infinitely many $K$-points, so $\airr_{\bar{k}}(C)=2$.  If $C$ is not bielliptic, then the work of Harris-Silverman \cite[Corollary 3]{harris_silverman} implies that $(C_K)_2$ is finite for every finite extension $K$ of $k$, so 
\[\airr_{\bar{k}}(C) \geq 3 = \gon_{\bar{k}}(C) \geq \airr_{\bar{k}}(C),\] 
and $\airr_{\bar{k}}(C)=3$.
\end{proof}
\subsection*{Funding}
This material is based upon work supported by the National Science Foundation Graduate Research Fellowship under Grant No. [DMS-1122374 to I.V.] and the National Science Foundation [DMS-1601946 to I.V.].
\subsection*{Acknowledgements} The second author's work on this subject was first inspired by hearing Lazarsfeld's proof of the minimal gonality of complete intersection curves in Brooke Ullery's class on linear series and hearing from Rachel Pries about the beautiful paper of Debarre--Klassen; we would like to thank them for these introductions.  We would also like to thank Joe Harris, Gerriet Martens, Bjorn Poonen, Brooke Ullery, Ravi Vakil, and Bianca Viray for helpful conversations, and to the anonymous referee for numerous corrections and suggestions.

\end{document}